\documentclass[11pt,a4paper]{article}
\usepackage[affil-it]{authblk}
\usepackage[utf8]{inputenc}
\usepackage{amsmath, amssymb, mathrsfs, amsthm, amsfonts}
\usepackage{latexsym}
\usepackage[pdftex]{hyperref}
\usepackage{a4wide}
\usepackage{color}

\newtheorem{theorem}{Theorem}[section]

\newtheorem{lemma}[theorem]{Lemma}
\newtheorem{proposition}[theorem]{Proposition}

\theoremstyle{definition}
\newtheorem{definition}[theorem]{Definition}
\theoremstyle{remark}

\title{The Tutte Polynomial of Symmetric Hyperplane Arrangements}

\author{Hery Randriamaro
\thanks{This research was funded by my mother \\
Lot II B 32 bis Faravohitra, 101 Antananarivo, Madagascar \\
e-mail: \texttt{hery.randriamaro@gmail.com}}}

\begin{document}

\maketitle

\begin{abstract}
\noindent The Tutte polynomial is originally a bivariate polynomial which enumerates the colorings of a graph and of its dual graph. Ardila extended in 2007 the definition of the Tutte polynomial on the real hyperplane arrangements. He particularly computed the Tutte polynomials of the hyperplane arrangements associated to the classical Weyl groups. Those associated to the exceptional Weyl groups were computed by De Concini and Procesi one year later. This article has two objectives: On one side, we extend the Tutte polynomial computing to the complex hyperplane arrangements. On the other side, we introduce a wider class of hyperplane arrangements which is that of the symmetric hyperplane arrangements. Computing the Tutte polynomial of a symmetric hyperplane arrangement permits us to deduce the Tutte polynomials of some hyperplane arrangements, particularly of those associated to the imprimitive reflection groups.

\bigskip 

\noindent \textsl{Keywords}: Tutte Polynomial, $\zeta_m$--Coboundary Polynomial, Symmetric Hyperplane Arrangement, Imprimitive Reflection Group 

\smallskip

\noindent \textsl{MSC Number}: 05A15 
\end{abstract}

\section{Introduction}

\subsection*{The Tutte Polynomial}

\noindent The concept of coloring originates from the map coloring problem. One assigns a vertex to each territory, and connect two vertices if and only if the corresponding territories are adjacent. One associates in this way a graph to any map of countries. A graph coloring corresponds to a way of coloring the vertices, so that two connected vertices are differently colored. And the chromatic polynomial is a graph polynomial which counts the number of graph colorings. In 1954, Tutte obtained a two--variable polynomial from which the chromatic polynomial of a graph, and that of its dual graph can be deduced \cite[§~3]{Tu}: it is the Tutte polynomial. Although this polynomial reveals more of the internal structure of the graph like its number of forests, of spanning subgraphs, and of acyclic orientations. Crapo showed also that it determines the number of subgraphs with any stated number of edges and any stated rank \cite[Theorem~1]{Cr}. In this article, we study the Tutte polynomial on objects generalizing the finite simple graphs, which are the hyperplane arrangements. Determining the Tutte polynomial of a graph is indeed equivalent to determining the Tutte polynomial of an associated hyperplane arrangement called graphic arrangement.

\smallskip

\noindent Let $z = (z_1, \dots, z_n)$ be a vector variable of the Hermitian space $\mathbb{C}^n$, and $c_1, \dots, c_n, d$ $n+1$ complex coefficients such that $(c_1, \dots, c_n) \neq (0, \dots, 0)$. A hyperplane $H$ of $\mathbb{C}^n$ is an affine subspace $H := \{z \in \mathbb{C}^n \ |\ c_1 z_1 + \dots + c_n z_n = d\}$. For simplicity, we just write $$H := \{c_1 z_1 + \dots + c_n z_n = d\}.$$

\noindent A hyperplane arrangement $\mathcal{A}$ in $\mathbb{C}^n$ is a finite set of hyperplanes. Denote by $\cap \mathcal{A}$ the subspace $$\cap \mathcal{A} := \bigcap_{H \in \mathcal{A}} H.$$ 
One says that the arrangement $\mathcal{A}$ is said central if $\cap \mathcal{A} \neq \emptyset$.\\
A subarrangement of $\mathcal{A}$ is a subset of $\mathcal{A}$. The rank function $r$ is defined for each central subarrangement $\mathcal{B}$ of $\mathcal{A}$ by $$r(\mathcal{B}) := n - \dim \cap \mathcal{B}.$$ This function is extended to the function $r: 2^{\mathcal{A}} \rightarrow \mathbb{N}$ by defining the rank of a noncentral subarrangement $\mathcal{B}$ to be the largest rank of a central subarrangement of $\mathcal{B}$.

\noindent Take two variables $x$ and $y$. The \emph{Tutte polynomial} of a hyperplane arrangement $\mathcal{A}$ is
$$T_{\mathcal{A}}(x,y) := \sum_{\substack{\mathcal{B} \subseteq \mathcal{A} \\ \text{central}}} (x-1)^{r(\mathcal{A})-r(\mathcal{B})} (y-1)^{|\mathcal{B}|-r(\mathcal{B})}.$$

\noindent At various points and lines of the $(x,y)$-plane, the Tutte polynomial evaluates to quantities studied in diverse fields of Mathematics and Physics. For examples,
\begin{itemize}
\item[$\bullet$] Along the hyperbola $xy=1$, the Tutte polynomial specializes to the Jones polynomial of an associated alternating knot \cite[§~3]{Th}. The Jones polynomial is an invariant of an oriented knot which assigns to each oriented knot a Laurent polynomial in the variable $t^{\frac{1}{2}}$ with integer coefficients.
\item[$\bullet$] For any positive integer q, along the hyperbola $(x-1)(y-1)=q$, the Tutte polynomial specializes to the partition function of the $q$-state Potts model \cite[§~I]{MeWe}. By studying the Potts model, one may gain insight into the behavior of ferromagnets, and certain other phenomena of solid-state physics.
\item[$\bullet$] At $x=1$, the Tutte polynomial specializes to the all-terminal reliability polynomial studied in network theory \cite[Proposition~3.3]{Br}. The reliability polynomial gives the probability that every pair of vertices of the graph remains connected after edges fail. 
\end{itemize}

\noindent Let $\mathcal{A}$ be a hyperplane arrangement in $\mathbb{C}^n$, and $\displaystyle M_{\mathcal{A}} = \mathbb{C}^n \setminus \bigcup_{H \in \mathcal{A}} H$ the complement of its variety. Then, the Poincaré polynomial of the cohomology ring of $M_{\mathcal{A}}$ is given by \cite{OrSo}
$$\sum_{k \geq 0} \mathrm{rank}\, H^k(M_{\mathcal{A}}, \mathbb{Z})\, q^k = (-1)^{r(\mathcal{A})} q^{n- r(\mathcal{A})} T_{\mathcal{A}}(1-q,0).$$ 

\noindent If $\mathcal{A}$ is a hyperplane arrangement in $\mathbb{R}^n$, then the number of regions into which $\mathcal{A}$ dissects $\mathbb{R}^n$ is equal to $|T_{\mathcal{A}}(2,0)|$ \cite[Theorem 2.5]{Sta}.

\smallskip

\noindent Take two other variables $q,t$, and consider a simple transformation of the Tutte polynomial, called the \emph{coboundary polynomial} of the hyperplane arrangement $\mathcal{A}$, defined by
$$\bar{\chi}_{\mathcal{A}}(q,t) := \sum_{\substack{\mathcal{B} \subseteq \mathcal{A} \\ \text{central}}} q^{r(\mathcal{A}) -r(\mathcal{B})} (t-1)^{|\mathcal{B}|}.$$
Since $\displaystyle T_{\mathcal{A}}(x,y) = \frac{\bar{\chi}_{\mathcal{A}}\big((x-1)(y-1), y \big)}{(y-1)^{r(\mathcal{A})}}$, computing the coboundary polynomial of a hyperplane arrangement is equivalent to computing its Tutte polynomial.\\
Recall that the hyperplane arrangements associated to the classical Weyl groups are
\begin{align*}
& (A_{n-1}) \quad \mathcal{A}_{A_{n-1}} = \big\{ \{z_i - z_j = 0\} \big\}_{1 \leq i < j \leq n},\\
& (B_n, C_n) \ \mathcal{A}_{B_n} = \big\{ \{z_i \pm z_j = 0\} \big\}_{1 \leq i < j \leq n} \sqcup \big\{ \{z_i = 0\} \big\}_{i \in [n]},\\
& (D_n) \qquad \mathcal{A}_{D_n} = \big\{ \{z_i \pm z_j = 0\} \big\}_{1 \leq i < j \leq n}.
\end{align*}
Ardila proved that
\begin{align*}
& \text{\cite[Theorem 4.1]{Ar}} \quad 1 + q \sum_{n \geq 1} \bar{\chi}_{\mathcal{A}_{A_n}}(q,t) \frac{x^n}{n!} = \Big(\sum_{n \in \mathbb{N}} t^{\binom{n}{2}} \frac{x^n}{n!}\Big)^q,\\
& \text{\cite[Theorem 4.2]{Ar}} \quad \sum_{n \in \mathbb{N}} \bar{\chi}_{\mathcal{A}_{B_n}}(q,t) \frac{x^n}{n!} = \Big(\sum_{n \in \mathbb{N}} t^{n^2} \frac{x^n}{n!}\Big) \Big(\sum_{n \in \mathbb{N}} 2^n t^{\binom{n}{2}} \frac{x^n}{n!}\Big)^{\frac{q-1}{2}},\\
& \text{\cite[Theorem 4.3]{Ar}} \quad \sum_{n \in \mathbb{N}} \bar{\chi}_{\mathcal{A}_{D_n}}(q,t) \frac{x^n}{n!} = \Big(\sum_{n \in \mathbb{N}} t^{n(n-1)} \frac{x^n}{n!}\Big) \Big(\sum_{n \in \mathbb{N}} 2^n t^{\binom{n}{2}} \frac{x^n}{n!}\Big)^{\frac{q-1}{2}}.
\end{align*}

\noindent De Concini and Procesi computed the Tutte polynomials of the hyperplane arrangements associated to the exceptional Weyl groups \cite[3.4. Exceptional types]{DePr}.

\subsection*{Symmetric Hyperplane Arrangement}

\noindent \emph{We aim to give a more general result by introducing the symmetric hyperplane arrangements, and the $\zeta_m$--coboundary polynomial of a hyperplane arrangement: More exactly, we compute the $\zeta_m$--coboundary polynomial of a symmetric hyperplane arrangement. That permits us to deduce the Tutte polynomials of hyperplane arrangements having symmetric structure, particularly those of the hyperplane arrangements associated to the imprimitive reflection groups.} 

\smallskip

\noindent Let $\mathbb{U}_m$ be the group of all $m^{\text{th}}$ roots of unity. For a primitive $m^{\text{th}}$ root of unity $\zeta_m$, denote by $l_m$ the dimension of the module $\mathbb{Z}[\zeta_m]$:
\begin{itemize}
\item[$\bullet$] If $m$ is odd, $l_m = m$,
\item[$\bullet$] If $m$ is even, $\displaystyle l_m = \frac{m}{2}$.
\end{itemize}

\noindent It is useful for us to generalize the coboundary polynomial into what we call \emph{$\zeta_m$--coboundary polynomial} of a hyperplane arrangement $\mathcal{A}$ defined by
$$\bar{\chi}_{\mathcal{A}}^{(\zeta_m)}(q,t) := \sum_{\substack{\mathcal{B} \subseteq \mathcal{A} \\ \text{central}}} q^{l_m(r(\mathcal{A}) -r(\mathcal{B}))} (t-1)^{|\mathcal{B}|}.$$
Since $\displaystyle T_{\mathcal{A}}(x,y) = \frac{\bar{\chi}_{\mathcal{A}}^{(\zeta_m)}\big((x-1)^{\frac{1}{l_m}}(y-1)^{\frac{1}{l_m}}, y \big)}{(y-1)^{r(\mathcal{A})}}$, computing the $\zeta_m$--coboundary polynomial of a hyperplane arrangement is equivalent to computing its Tutte polynomial.

\smallskip

\noindent An element $\sigma$ of the symmetric group $\mathfrak{S}_n$ acts on the hyperplane $H = \{c_1 z_1 + \dots + c_n z_n = d\}$ in $\mathbb{C}^n$ by
$$\sigma \cdot H := \{c_1 z_{\sigma(1)} + \dots + c_n z_{\sigma(n)} = d\}.$$
More generally, an element $\sigma$ of the symmetric group $\mathfrak{S}_n$ acts on the hyperplane arrangement $\mathcal{A}= \{H_1, \dots, H_m\}$ by
$$\sigma \cdot \mathcal{A} := \{\sigma \cdot H_1, \dots, \sigma \cdot H_m\}.$$

\begin{definition}
A \emph{symmetric hyperplane arrangement} or a \emph{\textsc{SH}--arrangement} is a hyperplane arrangement $\mathcal{A}$ in $\mathbb{C}^n$ such that, for every permutation $\sigma$ in $\mathfrak{S}_n$, we have $\sigma \cdot \mathcal{A} = \mathcal{A}$.
\end{definition} 

\noindent The hyperplane arrangements associated to the classical Weyl groups are symmetric hyperplane arrangements. Another is the hyperplane arrangement $\mathcal{I}_n$ in $\mathbb{C}^n$ defined, for $n \geq 2$, by $$\mathcal{I}_n := \big\{\{z_i = 0\}\big\}_{i \in [n]} \sqcup \big\{\{z_i = 1\}\big\}_{i \in [n]} \sqcup  \big\{\{z_i + z_j = 1\}\big\}_{1 \leq i < j \leq n}.$$

\noindent We particularly study the special case of the colored symmetric hyperplane arrangements. Recall that the colored permutation group of $n$ elements with $m$ colors is the wreath product $\mathbb{U}_m \wr \mathfrak{S}_n$, and one represents one of its elements $\pi$ by 
$$\pi = \left( \begin{array}{cccc} 1 & 2 & \dots & n\\
\xi_1 \sigma(1) & \xi_2 \sigma(2) & \dots & \xi_n \sigma(n) \end{array} \right)\ \text{with}\ \sigma \in \mathfrak{S}_n\ \text{and}\ \xi_i \in \mathbb{U}_m.$$

\noindent The colored permutation $\pi$ of $\mathbb{U}_m \wr \mathfrak{S}_n$ acts on the hyperplane $H = \{c_1 z_1 + \dots + c_n z_n = d\}$ in $\mathbb{C}^n$ by
$$\pi \cdot H := \{c_1 \xi_1 z_{\sigma(1)} + \dots + c_n \xi_n z_{\sigma(n)} = d\}.$$
More generally, the colored permutation $\pi$ acts on the hyperplane arrangement $\mathcal{A} = \{H_1, \dots, H_m\}$ in $\mathbb{C}^n$ by $$\pi \cdot \mathcal{A} := \{\pi \cdot H_1, \dots, \pi \cdot H_m\}.$$

\begin{definition}
A \emph{colored symmetric hyperplane arrangement} or a \emph{\textsc{CSH}--arrangement} is a hyperplane arrangement $\mathcal{A}$ in $\mathbb{C}^n$ such that, for every colored permutation $\pi$ of $\mathbb{U}_m \wr \mathfrak{S}_n$, we have $\pi \cdot \mathcal{A} = \mathcal{A}$.
\end{definition}

\noindent An example of colored symmetric hyperplane arrangement is the hyperplane arrangement associated to the imprimitive reflection group $G(m,p,n)$ defined by
$$\mathcal{G}_{m,p,n} := \big\{\{z_i = 0\}\big\}_{i \in [n]} \sqcup \big\{\{z_i - \xi z_j = 0\}\big\}_{\substack{1 \leq i < j \leq n \\ \xi \in \mathbb{U}_m}}.$$

\subsection*{Structure of this Article}

\noindent Consider a finite set $\mathbb{S}$, and take an element $v = (v_1, \dots, v_n)$ of $\mathbb{S}^n$.

\noindent Define the function $\mathtt{S}(v) := \{v_1, \dots, v_n\}$, and extend it to every subset $M$ of $\displaystyle \bigsqcup_{n \in \mathbb{N}} \mathbb{S}^n$ by
$$\mathtt{S}(M) := \bigcup_{v \in M} \mathtt{S}(v).$$

\noindent For an element $t$ in $\mathbb{S}$, denote by $\mathtt{o}_t(v)$ its number of occurrences in $v$. Define the integer sequence $\mathtt{c}(v)$ indexed by $\mathbb{S}$ by
$$\mathtt{c}(v) := \big(\mathtt{o}_t(v)\big)_{t \in \mathbb{S}}.$$

\noindent Denote by $\mathbb{N}^{\mathbb{S}}$ the set of integer sequences $\mathsf{a} = (\mathsf{a}_t)_{t \in \mathbb{S}}$ indexed by $\mathbb{S}$. We write
$$\binom{n}{\mathsf{a}} \ \text{for the multinomial} \ \binom{n}{(\mathsf{a}_t)_{t \in \mathbb{S}}},$$
and $\|\mathsf{a}\|$ for the norm $$\|\mathsf{a}\| := \sum_{t \in \mathbb{S}} \mathsf{a}_t.$$

\noindent Define the function $\mathtt{f}: \mathbb{N}^{\mathbb{S}} \times \mathbb{S}^n \rightarrow \mathbb{N}$ by
$$\mathtt{f}(\mathsf{a}, v) := \prod_{t \in \mathtt{S}(v)} \binom{\mathsf{a}_t}{\mathtt{o}_t(v)}.$$
Extend that function to every subset $M$ of $\displaystyle \bigsqcup_{n \in \mathbb{N}} \mathbb{S}^n$ by
$$\mathtt{f}(\mathsf{a}, M) := \sum_{v \in M} \mathtt{f}(\mathsf{a}, v).$$

\begin{definition}
A $\mathbb{Z}[\zeta_m]$--hyperplane is a hyperplane $H = \{c_1 z_1 + \dots + c_n z_n = d\}$ in $\mathbb{C}^n$ whose coefficients $c_1, \dots, c_n, d$ are in $\mathbb{Z}[\zeta_m]$.
\end{definition}

\noindent \textsl{From now on, unless explicitly stated, every hyperplane considered in this article is a $\mathbb{Z}[\zeta_m]$--hyperplane, and every hyperplane arrangement an arrangement of $\mathbb{Z}[\zeta_m]$--hyperplanes.}

\smallskip

\noindent $\mathbb{F}_q$ is the usually field of order $q$. In \textbf{Section \ref{Symmetric}}, we define what we call solution indice partition $\mathrm{M}_1, \dots, \mathrm{M}_s$ of a \textsc{SH}--arrangement $\mathcal{A}$, and prove that its $\zeta_m$--coboundary polynomial is
$$\bar{\chi}_{\mathcal{A}}^{(\zeta_m)}(q,t) = q^{l_m(r(\mathcal{A})-n)} \sum_{\substack{\mathsf{a} \in \mathbb{N}^{\mathbb{F}_q[\zeta_m]} \\ \|\mathsf{a}\| = n}} \binom{n}{\mathsf{a}} t^{\sum_{j=1}^s \mathtt{f}(\mathsf{a}, \mathrm{M}_j)}.$$
One directly obtains the coboundary polynomials of the hyperplane arrangements associated to the classical Weyl groups from it. As computing example, we show that
\begin{align*}
\sum_{n \in \mathbb{N}} q \bar{\chi}_{\mathcal{I}_n}(q,t) \frac{x^n}{n!}\ =\ & \Big( \sum_{n \in \mathbb{N}} t^{\binom{n}{2}} \frac{x^n}{n!} \Big) \times \Big( \sum_{n \in \mathbb{N}} \big( \sum_{a+b=n} \binom{n}{a,b} t^{a+b+ab} \big) \frac{x^n}{n!} \Big) \\
& \times \Big( \sum_{n \in \mathbb{N}} \big( \sum_{a+b=n} \binom{n}{a,b} t^{ab} \big) \frac{x^n}{n!} \Big)^{\frac{q-3}{2}}.
\end{align*}

\noindent Take an element $u$ in $\mathbb{S}$ and a positive integer $m$. Define the function $\mathtt{f}_u^{(m)}: \mathbb{N}^{\mathbb{S}} \times \mathbb{S}^n \rightarrow \mathbb{N}$ by
$$\mathtt{f}_u^{(m)}(\mathsf{a}, v) = \left\{ \begin{array}{ll}
m^{n-1} \binom{\mathsf{a}_u}{\mathtt{o}_u(v)} & \text{if}\ \mathtt{o}_u(v) = n \\
m^{\mathtt{o}_u(v)} \mathtt{f}(\mathsf{a}, v) & \text{otherwise} \end{array} \right..$$
Extend that function to every subset $M$ of $\displaystyle \bigsqcup_{n \in \mathbb{N}} \mathbb{S}^n$ by
$$\mathtt{f}_u^{(m)}(\mathsf{a}, M) := \sum_{v \in M} \mathtt{f}_u^{(m)}(\mathsf{a}, v).$$

\noindent In \textbf{Section \ref{Colored}}, we define what we call solution indice partition $\mathrm{M}_{1 \ast}, \dots, \mathrm{M}_{s \ast}$ of a \textsc{CSH}--arrangement $\mathcal{A}$, and prove that its $\zeta_m$--coboundary polynomial is
$$\bar{\chi}_{\mathcal{A}}^{(\zeta_m)}(q,t) = q^{l_m(r(\mathcal{A})-n)} \sum_{\substack{\mathsf{a} \in \mathbb{N}^{\mathbb{F}_q[\zeta_m]_{\ast}} \\ \|\mathsf{a}\| = n}} \binom{n}{\mathsf{a}} m^{n- \mathsf{a}_{\ddot{0}}} t^{\sum_{j=1}^s \mathtt{f}_{\ddot{0}}^{(m)}(\mathsf{a}, \mathrm{M}_{j \ast})}.$$
That permits us to deduce the exponential generating function of the $\zeta_m$--coboundary polynomials of the hyperplane arrangements $\mathcal{G}_{m,p,n}$  
$$\sum_{n \in \mathbb{N}} \bar{\chi}_{\mathcal{G}_{m,p,n}}^{(\zeta_m)}(q,t) \frac{x^n}{n!} = \Big(\sum_{n \in \mathbb{N}} t^{n+m\binom{n}{2}} \frac{x^n}{n!}\Big) \Big(\sum_{n \in \mathbb{N}} m^n t^{\binom{n}{2}} \frac{x^n}{n!}\Big)^{\frac{q^{l_m}-1}{m}}.$$
\emph{The author would like to thank Federico Ardila who suggested him to compute the Tutte polynomials of the hyperplane arrangements associated to the imprimitive reflection groups.}

\smallskip

\noindent But before determining the $\zeta_m$--coboundary polynomials, we have to prove an exponential generating function formula in \textbf{Section \ref{Exponential}} that we need later.\\
We also extend the finite field method to hyperplane arrangements in $\mathbb{C}^n$ in \textbf{Section \ref{Complex}}. This extension is fundamental to compute the $\zeta_m$--coboundary polynomials of a \textsc{SH}--arrangement, and a \textsc{CSH}--arrangement.

\section{Exponential Generating Function} \label{Exponential}

\noindent We briefly recall some properties of the exponential generating functions in this section. We particularly need Proposition \ref{PrPrin} to prove Proposition \ref{PrSy}, and Theorem \ref{ThCoEGF}.

\begin{definition}
Let $x$ be a variable, and $\mathbb{R}[t]$ the polynomial ring with variable $t$. The \emph{exponential generating function} $u(x)$ or EGF of a sequence $(u_n)_{n \in \mathbb{N}}$, $u_n$ belonging to $\mathbb{R}[t]$, is the formal power serie
$$u(x) := \sum_{n \in \mathbb{N}} u_n \frac{x^n}{n!}.$$
\end{definition}

\noindent Take $q$ other EGF's $v^{(1)}(x), \dots, v^{(q)}(x)$ such that $$v^{(i)}(x) = \sum_{n \in \mathbb{N}} v_n^{(i)} \frac{x^n}{n!},\ \text{and}\ u(x) = v^{(1)}(x) \cdot \, \dots \, \cdot v^{(q)}(x).$$
We know from the convolution formula \cite[§~II.2]{FlSe} that $$u_n = \sum_{a_1 + \dots + a_q = n} \binom{n}{a_1, \dots, a_q}\, v_{a_1}^{(1)} \, \dots \, v_{a_q}^{(q)}.$$

\noindent In the next proposition, we have a more general form of the convolution formula.

\begin{proposition} \label{PrPrin}
Let $q \in \mathbb{N}^*$, and $j \in [q]$. Take $q$ vector variables $\mathsf{a}^{(j)} = (a_1^{(j)}, \dots, a_{i_j}^{(j)})$. Consider $q$ multi-indexed sequences of polynomials $\displaystyle \big( p_{\mathsf{a}^{(j)}}^{(j)} \big)_{\mathsf{a}^{(j)} \in \mathbb{N}^{i_j}}$ with $\displaystyle p_{\mathsf{a}^{(j)}}^{(j)} \in \mathbb{R}[t]$.\\ Let $\displaystyle u(x) = \sum_{n \in \mathbb{N}} u_n \frac{x^n}{n!}$ be an EGF such that
$$u_n = \sum_{\| \mathsf{a}^{(1)} \| + \dots + \| \mathsf{a}^{(q)} \| = n} \binom{n}{\mathsf{a}^{(1)} \cdot ... \cdot \mathsf{a}^{(q)}} \, p_{\mathsf{a}^{(1)}}^{(1)} \dots p_{\mathsf{a}^{(q)}}^{(q)}.$$
Then, there exist $q$ EGF $\displaystyle v^{(j)}(x) = \sum_{n \in \mathbb{N}} v_n^{(j)} \frac{x^n}{n!}$ such that $u(x) = v^{(1)}(x) \cdot ... \cdot v^{(q)}(x)$, and
$$v_n^{(j)} = \sum_{\| \mathsf{a}^{(j)} \| = n} \binom{n}{\mathsf{a}^{(j)}}\, p_{\mathsf{a}^{(j)}}^{(j)}.$$
\end{proposition}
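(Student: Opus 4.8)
The plan is to take the formula for the $v_n^{(j)}$ as a \emph{definition} and then verify, via the convolution formula recalled just before the statement, that the resulting product of EGF's is $u(x)$. Concretely, I would first set
$$v_n^{(j)} := \sum_{\|\mathsf{a}^{(j)}\| = n} \binom{n}{\mathsf{a}^{(j)}}\, p_{\mathsf{a}^{(j)}}^{(j)} \in \mathbb{R}[t], \qquad v^{(j)}(x) := \sum_{n \in \mathbb{N}} v_n^{(j)} \frac{x^n}{n!},$$
so that the only thing left to prove is the identity $u(x) = v^{(1)}(x) \cdot \, \dots \, \cdot v^{(q)}(x)$ of formal power series in $x$ over $\mathbb{R}[t]$.

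Next I would apply the $q$-fold convolution formula — which follows from the two-factor case recalled above by an immediate induction on $q$ — to the product $v^{(1)}(x) \cdot \, \dots \, \cdot v^{(q)}(x)$: its coefficient of $\frac{x^n}{n!}$ equals
$$\sum_{b_1 + \dots + b_q = n} \binom{n}{b_1, \dots, b_q}\, v_{b_1}^{(1)} \, \dots \, v_{b_q}^{(q)}.$$
I would then substitute the definition of each $v_{b_j}^{(j)}$, distribute the product of the $q$ inner sums, and use that $b_j = \|\mathsf{a}^{(j)}\|$ on each summand to merge the outer sum over $(b_1, \dots, b_q)$ with the inner sums over the $\mathsf{a}^{(j)}$ into a single sum over all tuples $(\mathsf{a}^{(1)}, \dots, \mathsf{a}^{(q)})$ with $\|\mathsf{a}^{(1)}\| + \dots + \|\mathsf{a}^{(q)}\| = n$, the summand being $\binom{n}{\|\mathsf{a}^{(1)}\|, \dots, \|\mathsf{a}^{(q)}\|} \big( \prod_{j=1}^q \binom{\|\mathsf{a}^{(j)}\|}{\mathsf{a}^{(j)}}\big)\, p_{\mathsf{a}^{(1)}}^{(1)} \dots p_{\mathsf{a}^{(q)}}^{(q)}$.

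The crux is then the elementary multinomial identity
$$\binom{n}{\|\mathsf{a}^{(1)}\|, \dots, \|\mathsf{a}^{(q)}\|} \prod_{j=1}^q \binom{\|\mathsf{a}^{(j)}\|}{\mathsf{a}^{(j)}} = \binom{n}{\mathsf{a}^{(1)} \cdot \, \dots \, \cdot \mathsf{a}^{(q)}},$$
which is immediate once one writes both sides as $n!$ divided by the product of the factorials of all the coordinates $a_k^{(j)}$, the intermediate factorials $\|\mathsf{a}^{(j)}\|!$ cancelling. Substituting this in shows the coefficient of $\frac{x^n}{n!}$ in $v^{(1)}(x) \cdot \, \dots \, \cdot v^{(q)}(x)$ is exactly the given expression for $u_n$, which completes the proof.

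I do not anticipate a genuine obstacle: the argument is bookkeeping with multi-index notation, and the only point that needs care is keeping the nested summation ranges consistent when passing from the double sum over $(b_1, \dots, b_q)$ and the $\mathsf{a}^{(j)}$ to the single sum over tuples with $\sum_j \|\mathsf{a}^{(j)}\| = n$. It is worth remarking that the $v^{(j)}$ are not unique (one may rescale between factors), so the proposition asserts \emph{existence}, and the choice made above is the canonical one.
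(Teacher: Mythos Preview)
Your proof is correct and is essentially the paper's own argument run in the reverse direction: the paper starts from $u_n$, expands the multinomial $\binom{n}{\mathsf{a}^{(1)}\cdot\ldots\cdot\mathsf{a}^{(q)}}$ via the same identity $\binom{n}{b_1,\dots,b_q}\prod_j\binom{b_j}{\mathsf{a}^{(j)}}=\binom{n}{\mathsf{a}^{(1)}\cdot\ldots\cdot\mathsf{a}^{(q)}}$ with $b_j=\|\mathsf{a}^{(j)}\|$, and then reads off the $v^{(j)}$ from the convolution formula, whereas you define the $v^{(j)}$ first and verify the product equals $u(x)$. The key ingredients (the convolution formula and the multinomial-splitting identity) are identical.
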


\begin{proof}
Take $q$ variables $b_j$ such that $b_j = \| \mathsf{a}^{(j)} \|$. We have
\begin{align*}
u_n & = \sum_{\| \mathsf{a}^{(1)} \| + \dots + \| \mathsf{a}^{(q)} \| = n} \binom{n}{\mathsf{a}^{(1)} \cdot ... \cdot \mathsf{a}^{(q)}} \, p_{\mathsf{a}^{(1)}}^{(1)} \dots p_{\mathsf{a}^{(q)}}^{(q)} \\
& = \sum_{\|\mathsf{a}^{(1)}\| + \dots + \|\mathsf{a}^{(q)}\| = n} n! \, \frac{p_{\mathsf{a}^{(1)}}^{(1)}}{a_1^{(1)}! \, \dots \, a_{i_1}^{(1)}!} \, \dots \, \frac{p_{\mathsf{a}^{(q)}}^{(q)}}{a_1^{(q)}! \, \dots \, a_{i_q}^{(q)}!} \\
& = \sum_{\|\mathsf{a}^{(1)}\| + \dots + \|\mathsf{a}^{(q)}\| = n} \binom{n}{b_1, \dots, b_q} \, \prod_{j=1}^q \binom{b_j}{\mathsf{a}^{(j)}}\, p_{\mathsf{a}^{(j)}}^{(j)} \\
& = \sum_{b_1 + \dots + b_q = n} \binom{n}{b_1, \dots, b_q} \, \prod_{j=1}^q \sum_{\|\mathsf{a}^{(j)}\| = b_j} \binom{b_j}{\mathsf{a}^{(j)}} p_{\mathsf{a}^{(j)}}^{(j)}.
\end{align*}
We deduce from the convolution formula that $u(x) = v^{(1)}(x) \cdot \, \dots \, \cdot v^{(q)}(x)$ with
$$v_{b_j}^{(j)} = \sum_{\| \mathsf{a}^{(j)} \| = b_j} \binom{b_j}{\mathsf{a}^{(j)}}\, p_{\mathsf{a}^{(j)}}^{(j)}.$$
\end{proof}

\section{Extension of the Finite Field Method} \label{Complex}

\noindent The finite field method developed by Ardila \cite[Theorem 3.3]{Ar} reduces the determination of the coboundary polynomial to a counting problem for the case of the real hyperplane arrangements. In this section, we extend this method to the complex hyperplane arrangements, and reduce the determination of the $\zeta_m$--coboundary polynomial to a counting problem. We use it after to compute the $\zeta_m$--coboundary polynomials of the \textsc{SH} and the \textsc{CSH}--arrangements.

\smallskip

\noindent Let $q$ be a prime number, and $\mathbb{F}_q$ the field of integer modulo $q$. The reduction modulo $q$ of the element $\displaystyle k = \sum_{i=0}^{l_m-1} k_i \zeta_m^i$ of the ring $\mathbb{Z}[\zeta_m]$ is $$\check{k} := \sum_{i=0}^{l_m-1} \check{k}_i \zeta_m^i \in \mathbb{F}_q[\zeta_m] \quad \text{with} \quad \check{k}_i \equiv k_i \mod q.$$

\noindent For two elements $\displaystyle \sum_{i_1=0}^{l_m-1} \check{h}_{i_1} \zeta_m^{i_1}$ and $\displaystyle \sum_{i_2=0}^{l_m-1} \check{k}_{i_2} \zeta_m^{i_2}$ of the field $\mathbb{F}_q[\zeta_m]$, the multiplication is naturally defined by
$$\sum_{i_1=0}^{l_m-1} \check{h}_{i_1} \zeta_m^{i_1} \cdot \sum_{i_2=0}^{l_m-1} \check{k}_{i_2} \zeta_m^{i_2} := \sum_{i_1, i_2=0}^{l_m-1} \check{h}_{i_1} \check{k}_{i_2} \zeta_m^{i_1 + i_2}.$$

\noindent Take a hyperplane $H = \{c_1 z_1 + \dots + c_n z_n = d\}$ in $\mathbb{C}^n$, where $c_1, \dots, c_n \in \mathbb{Z}[\zeta_m]$, and a prime number $q$. We say that $H$ reduces correctly over $\mathbb{F}_q[\zeta_m]$ if, modulo $q$, it induces a hyperplane in $\mathbb{F}_q[\zeta_m]^n$ denoted $\check{H} := \{\check{c}_1 \check{z}_1 + \dots + \check{c}_n \check{z}_n = \check{d}\}$.

\begin{definition}
A hyperplane arrangement $\mathcal{A}$ in $\mathbb{C}^n$ with coefficients in $\mathbb{Z}[\zeta_m]$ reduces correctly over $\mathbb{F}_q[\zeta_m]$ if it induces a hyperplane arrangement $\check{\mathcal{A}}$ in $\mathbb{F}_q[\zeta_m]^n$, formed by the induced hyperplanes of $\mathcal{A}$ modulo $q$, and the hyperplane arrangements $\mathcal{A}$ and $\check{\mathcal{A}}$ are isomorphic.
\end{definition}

\noindent To get the isomorphism, the determinants of all minors of the matrix formed by the coefficients of the hyperplanes in $\mathcal{A}$ must be nonzero modulo $q$. Thus, if we choose $q$ to be a prime larger than the coefficients, as elements in $\mathbb{Z}[\zeta_m]$, of all these determinants, we can guarantee to have a correct reduction.

\begin{theorem} \label{ThCo}
Let $\mathcal{A}$ be a hyperplane arrangement in $\mathbb{C}^n$ with coefficients in $\mathbb{Z}[\zeta_m]$, and $\check{\mathcal{A}}$ its reduced hyperplane arrangement in $\mathbb{F}_q[\zeta_m]^n$, for a large enough prime number $q$. Then, $$q^{l_m(n-r(\mathcal{A}))} \bar{\chi}_{\mathcal{A}}^{(\zeta_m)}(q,t) = \sum_{\check{z} \in \mathbb{F}_q[\zeta_m]^n} t^{h(\check{z})},$$ where $h(\check{z})$ is the number of hyperplanes in $\check{\mathcal{A}}$ the vector $\check{z}$ lies on. 
\end{theorem}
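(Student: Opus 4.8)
The plan is to mimic Ardila's finite field argument, but keeping careful track of the module $\mathbb{Z}[\zeta_m]$ so that the exponent $l_m$ appears in the right places. First I would expand the right-hand side by partitioning the points $\check z \in \mathbb{F}_q[\zeta_m]^n$ according to which central subarrangement they realize: for $\check z$, let $\check{\mathcal{A}}_{\check z} := \{\check H \in \check{\mathcal{A}} : \check z \in \check H\}$, so $h(\check z) = |\check{\mathcal{A}}_{\check z}|$ and every such $\check{\mathcal{A}}_{\check z}$ is central (it contains $\check z$ in its intersection). Writing $t^{h(\check z)} = \sum_{\check{\mathcal{B}} \subseteq \check{\mathcal{A}}_{\check z}} (t-1)^{|\check{\mathcal{B}}|}$ by the binomial theorem and swapping the order of summation gives
\[
\sum_{\check z \in \mathbb{F}_q[\zeta_m]^n} t^{h(\check z)} = \sum_{\substack{\check{\mathcal{B}} \subseteq \check{\mathcal{A}} \\ \text{central}}} (t-1)^{|\check{\mathcal{B}}|} \cdot \#\{\check z \in \mathbb{F}_q[\zeta_m]^n : \check z \in \cap \check{\mathcal{B}}\}.
\]
The inner count is the number of points in the affine subspace $\cap \check{\mathcal{B}}$ of $\mathbb{F}_q[\zeta_m]^n$; since $\mathbb{F}_q[\zeta_m]$ is a field with $q^{l_m}$ elements and $\cap \check{\mathcal{B}}$ has dimension $n - r(\check{\mathcal{B}})$ over it, this count is $(q^{l_m})^{\,n - r(\check{\mathcal{B}})} = q^{\,l_m(n - r(\check{\mathcal{B}}))}$.

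Next I would invoke the correct-reduction hypothesis: because $q$ is large enough, $\mathcal{A}$ and $\check{\mathcal{A}}$ are isomorphic as arrangements, so there is a rank-preserving bijection between central subarrangements $\mathcal{B} \subseteq \mathcal{A}$ and central subarrangements $\check{\mathcal{B}} \subseteq \check{\mathcal{A}}$ with $r(\mathcal{B}) = r(\check{\mathcal{B}})$ and $|\mathcal{B}| = |\check{\mathcal{B}}|$. Substituting, the right-hand side becomes
\[
\sum_{\substack{\mathcal{B} \subseteq \mathcal{A} \\ \text{central}}} (t-1)^{|\mathcal{B}|} \, q^{\,l_m(n - r(\mathcal{B}))} = q^{\,l_m n} \sum_{\substack{\mathcal{B} \subseteq \mathcal{A} \\ \text{central}}} q^{-l_m r(\mathcal{B})} (t-1)^{|\mathcal{B}|}.
\]
Comparing with the definition $\bar{\chi}_{\mathcal{A}}^{(\zeta_m)}(q,t) = \sum_{\mathcal{B} \text{ central}} q^{l_m(r(\mathcal{A}) - r(\mathcal{B}))} (t-1)^{|\mathcal{B}|}$, we see the sum differs from $\bar{\chi}_{\mathcal{A}}^{(\zeta_m)}(q,t)$ by the overall factor $q^{l_m r(\mathcal{A})}$, so the right-hand side equals $q^{\,l_m n} q^{-l_m r(\mathcal{A})} \bar{\chi}_{\mathcal{A}}^{(\zeta_m)}(q,t) = q^{\,l_m(n - r(\mathcal{A}))} \bar{\chi}_{\mathcal{A}}^{(\zeta_m)}(q,t)$, which is the claimed identity.

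The main obstacle, and the only step that is not pure bookkeeping, is justifying that $\cap \check{\mathcal{B}}$ really has exactly $q^{\,l_m(n - r(\check{\mathcal{B}}))}$ points and that the rank and cardinality statistics transfer from $\mathcal{A}$ to $\check{\mathcal{A}}$. Both rest on the correct-reduction setup described before the theorem: choosing $q$ larger than every minor of the coefficient matrix keeps all the relevant ranks intact modulo $q$, so a set of hyperplanes in $\mathcal{A}$ has nonempty intersection (is central) of a given dimension if and only if the corresponding reduced hyperplanes do, and the affine solution space over the field $\mathbb{F}_q[\zeta_m]$ then has the expected dimension and hence the expected point count. I would state this carefully, perhaps isolating it as a short lemma on the preservation of the intersection lattice under correct reduction, and for the point count simply use that an affine subspace of dimension $k$ over a finite field $K$ has $|K|^k$ points with $|K| = |\mathbb{F}_q[\zeta_m]| = q^{l_m}$.
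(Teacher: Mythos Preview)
Your proof is correct and follows essentially the same approach as the paper's: both arguments use the binomial identity $t^{h(\check z)}=\sum_{\check{\mathcal{B}}\subseteq H(\check z)}(t-1)^{|\check{\mathcal{B}}|}$, swap the order of summation, count the points of $\cap\check{\mathcal{B}}$ as $q^{l_m\dim\cap\check{\mathcal{B}}}$, and invoke correct reduction to identify ranks and cardinalities in $\mathcal{A}$ and $\check{\mathcal{A}}$. The only cosmetic difference is that the paper runs the computation from the left-hand side to the right while you run it from right to left.
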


\begin{proof}
To prove Theorem \ref{ThCo}, we need both remarks:
\begin{itemize}
\item[(R1)] If $\check{V}$ is a $d$-dimensional submodule of $\mathbb{F}_q[\zeta_m]^n$, then $|\check{V}|=q^{l_m \times d}$.
\item[(R2)] For a strictly positive integer $d$, we have
$$\sum_{I \subseteq [d]}t^{|I|} = \sum_{k=0}^d \binom{d}{k} t^k = (1+t)^d.$$
We can now proceed to the proof of the theorem. Let $H(\check{z})$ the set of hyperplanes in $\check{\mathcal{A}}$ that $\check{z}$ lies on. We have
\begin{align*}
q^{l_m(n-r(\mathcal{A}))} \bar{\chi}_{\mathcal{A}}^{(\zeta_m)}(q,t) & = \sum_{\substack{\mathcal{B} \subseteq \mathcal{A} \\ \text{central}}} q^{l_m(n-r(\mathcal{B}))} (t-1)^{|\mathcal{B}|} \\
& = \sum_{\substack{\mathcal{B} \subseteq \mathcal{A} \\ \text{central}}} q^{l_m \times \dim \cap \mathcal{B}} (t-1)^{|\mathcal{B}|}
= \sum_{\substack{\check{\mathcal{B}} \subseteq \check{\mathcal{A}} \\ \text{central}}} q^{l_m \times \dim \cap \check{\mathcal{B}}} (t-1)^{|\check{\mathcal{B}}|} \\
& = \sum_{\substack{\check{\mathcal{B}} \subseteq \check{\mathcal{A}} \\ \text{central}}} |\cap \check{\mathcal{B}}| (t-1)^{|\check{\mathcal{B}}|} \quad \text{(R1)} \\
& = \sum_{\substack{\check{\mathcal{B}} \subseteq \check{\mathcal{A}} \\ \text{central}}} \sum_{\check{z} \in \cap \check{\mathcal{B}}} (t-1)^{|\check{\mathcal{B}}|}
= \sum_{\check{z} \in \mathbb{F}_q[\zeta_m]^n} \sum_{\check{\mathcal{B}} \subseteq H(\check{z})} (t-1)^{|\check{\mathcal{B}}|}\\
& = \sum_{\check{z} \in \mathbb{F}_q[\zeta_m]^n} t^{h(\check{z})} \quad \text{(R2)}.
\end{align*}
\end{itemize}
\end{proof}

\section{The $\zeta_m$--Coboundary Polynomial of a \textsc{SH}--Arrangement} \label{Symmetric}

\noindent We compute the $\zeta_m$--coboundary polynomial of a \textsc{SH}--arrangement, and deduce the EGF of a sequence of \textsc{SH}--arrangement $\zeta_m$--coboundary polynomials. As example, we apply the results on the hyperplane arrangement $\mathcal{I}_n$. Song computed the characteristic polynomials of the arrangements $\mathcal{I}_2$ and $\mathcal{I}_3$ \cite[§~3]{So}. Remark that, by means of Whitney's theorem \cite[Theorem~2.4]{Sta}, one can express the characteristic polynomial $\chi_{\mathcal{A}}(q)$ of a hyperplane arrangement $\mathcal{A}$ in term of its coboundary polynomial by $\chi_{\mathcal{A}}(q) = q^{n-r(\mathcal{A})} \bar{\chi}_{\mathcal{A}}(q,0)$.

\begin{definition}
Let $\mathcal{A}$ be a \textsc{SH}--arrangement in $\mathbb{C}^n$. We can choose some hyperplanes $H_1, \dots, H_r$ in $\mathcal{A}$ such that
$$\mathcal{A} = \bigsqcup_{i=1}^r \big\{\sigma \cdot H_i \big\}_{\sigma \in \mathfrak{S}_n}.$$
The hyperplanes $H_1, \dots, H_r$ are called representative hyperplanes for $\mathcal{A}$.
\end{definition}

\noindent Recall that $\mathcal{I}_n = \big\{\sigma \cdot \{z_1 = 0\}\big\}_{\sigma \in \mathfrak{S}_n} \sqcup \big\{\sigma \cdot \{z_1 = 1\}\big\}_{\sigma \in \mathfrak{S}_n} \sqcup  \big\{\sigma \cdot \{z_1 + z_2 = 1\}\big\}_{\sigma \in \mathfrak{S}_n}$.

\begin{definition}
Let $\mathcal{A}$ be a \textsc{SH}--arrangement in $\mathbb{C}^n$, and $H_i$ a representative hyperplane. A representative equation of $\mathcal{A}$ is a multivariate equation
$$(E_i): c_1^{(i)} z_1 + \dots + c_{i_j}^{(i)} z_{i_j} = d^{(i)}, \quad 1 \leq i_j \leq n, \quad c_1^{(i)}, c_{i_j}^{(i)} \neq 0$$ such that for all hyperplane $H$ of a subarrangement $\displaystyle \big\{\sigma \cdot H_i \big\}_{\sigma \in \mathfrak{S}_n}$ of $\mathcal{A}$, there exists a permutation $\tau$ of $\mathfrak{S}_n$ such that $H = \{c_1^{(i)} z_{\tau(1)} + \dots + c_{i_j}^{(i)} z_{\tau(i_j)} = d^{(i)}\}$.
\end{definition}

\noindent The representative equations of $\mathcal{I}_n$ are $(E_0): z_1 = 0$, $(E_1): z_1 = 1$, and $(E_2): z_1 + z_2 = 1$.

\begin{definition}
Take a \textsc{SH}--arrangement $\displaystyle \mathcal{A} = \bigsqcup_{i=1}^r \big\{\sigma \cdot H_i \big\}_{\sigma \in \mathfrak{S}_n}$ in $\mathbb{C}^n$. For each representative hyperplane $H_i = \{c_1^{(i)} z_1 + \dots + c_{i_j}^{(i)} z_{i_j} = d^{(i)}\}$, define an equivalence relation $\sim_{H_i}$ on $\mathfrak{S}_n$ by $$\sigma \sim_{H_i} \tau \ \Longleftrightarrow \ c_1^{(i)} z_{\sigma(1)} + \dots + c_{i_j}^{(i)} z_{\sigma(i_j)} = c_1^{(i)} z_{\tau(1)} + \dots + c_{i_j}^{(i)} z_{\tau(i_j)}.$$
\end{definition}

\noindent For simplicity, we write $\mathfrak{S}_n^{\sim_{H_i}}$ for the equivalence class set instead of $\mathfrak{S}_n/\sim_{H_i}$. Then, we have
$$\mathcal{A} = \bigsqcup_{i=1}^r \big\{\sigma \cdot H_i\big\}_{\dot{\sigma} \in \mathfrak{S}_n^{\sim_{H_i}}}.$$

\noindent For the case of the hyperplane arrangement $\mathcal{I}_n$, we have 
\begin{align*}
\mathcal{I}_n =\ & \big\{\sigma \cdot \{z_1 = 0\}\big\}_{\dot{\sigma} \in \mathfrak{S}_n/\sim_{\{z_1 = 0\}}} \sqcup \big\{\sigma \cdot \{z_1 = 1\}\big\}_{\dot{\sigma} \in \mathfrak{S}_n/\sim_{\{z_1 = 1\}}} \\ & \sqcup  \big\{\sigma \cdot \{z_1 + z_2 = 1\}\big\}_{\dot{\sigma} \in \mathfrak{S}_n/\sim_{\{z_1 + z_2 = 1\}}}.
\end{align*}

\noindent Consider an equation $(E): c_1 z_1 + \dots + c_j z_j = d$ with coefficients in $\mathbb{Z}[\zeta_m]$. Denote the reduced equation modulo $q$ of $(E)$ by $(\check{E}): \check{c}_1 \check{z}_1 + \dots + \check{c}_j \check{z}_j = \check{d}$, and the solution set of the equation $(\check{E})$ in $\mathbb{F}_q[\zeta_m]^j$ by $\mathrm{Sol}(\check{E})$. For $\mathcal{I}_n$, we have
$$\mathrm{Sol}(\check{E}_0) = \{\check{0}\},\ \mathrm{Sol}(\check{E}_1) = \{\check{1}\},\ \text{and} \ \mathrm{Sol}(\check{E}_2) = \Big\{\big(\check{\frac{q+1}{2}}, \check{\frac{q+1}{2}}\big)\Big\} \sqcup \bigsqcup_{i=2}^{\frac{q-1}{2}} \big\{(\check{i}, \check{q-i+1})\big\}.$$

\begin{lemma} \label{LeSH}
Let $\mathcal{A}$ be a \textsc{SH}--arrangement in $\mathbb{C}^n$ having $r$ representative equations $(E_i): c_1^{(i)} z_1 + \dots + c_{i_j}^{(i)} z_{i_j} = d^{(i)},\ 1 \leq i_j \leq n,\ c_1^{(i)}, c_{i_j}^{(i)} \neq 0$. Then, the number of hyperplanes of the arrangement $\check{\mathcal{A}}$ an element $\check{u}$ of $\mathbb{F}_q[\zeta_m]^n$ lies on is $$h(\check{u}) = \sum_{i=1}^r \mathtt{f}\big(\mathtt{c}(\check{u}), \mathrm{Sol}(\check{E}_i)\big) \quad \text{with} \quad \mathtt{c}(\check{u}) = \big(\mathtt{o}_{\check{k}}(\check{u})\big)_{\check{k} \in \mathbb{F}_q[\zeta_m]}.$$ 
\end{lemma}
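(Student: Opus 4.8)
The plan is to decompose the reduced arrangement $\check{\mathcal{A}}$ family by family, turn each incidence count into a purely combinatorial count of "which values of $\check{u}$ are used", and then recognise that count as $\mathtt{f}(\mathtt{c}(\check{u}), \mathrm{Sol}(\check{E}_i))$. First I would use the correct reduction to inherit the family decomposition, writing $\check{\mathcal{A}} = \bigsqcup_{i=1}^{r} \{\sigma \cdot \check{H}_i\}_{\dot{\sigma} \in \mathfrak{S}_n^{\sim_{H_i}}}$ (for $q$ large the $\sim_{H_i}$--classes and the distinctness of hyperplanes are preserved). Hence $h(\check{u}) = \sum_{i=1}^r h_i(\check{u})$, where $h_i(\check{u})$ is the number of hyperplanes of the $i$--th family through $\check{u}$, and it suffices to prove $h_i(\check{u}) = \mathtt{f}(\mathtt{c}(\check{u}), \mathrm{Sol}(\check{E}_i))$ for each fixed $i$.

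Fix $i$ and write $(E_i)$ as $c_1 z_1 + \dots + c_p z_p = d$ with all $c_k \neq 0$. A hyperplane of the $i$--th family has the form $\{\check{c}_1 \check{z}_{\sigma(1)} + \dots + \check{c}_p \check{z}_{\sigma(p)} = \check{d}\}$; it depends only on the injective word $(\sigma(1), \dots, \sigma(p))$ modulo the slot-permutations under which $(E_i)$ is invariant as a hyperplane, and $\check{u}$ lies on it exactly when $(\check{u}_{\sigma(1)}, \dots, \check{u}_{\sigma(p)})$ is a solution of $(\check{E}_i)$. Reading $\mathrm{Sol}(\check{E}_i)$ as in the worked $\mathcal{I}_n$ example, namely as one representative per orbit of solution tuples under those slot-permutations, each hyperplane of the $i$--th family through $\check{u}$ corresponds to a unique $w \in \mathrm{Sol}(\check{E}_i)$, and I organise the count of $h_i(\check{u})$ according to $w$. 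For fixed $w = (w_1, \dots, w_p) \in \mathrm{Sol}(\check{E}_i)$, I claim that the hyperplanes of the $i$--th family through $\check{u}$ attached to $w$ are in bijection with the families of sets $(S_t)_{t \in \mathtt{S}(w)}$, where $S_t$ is a $\mathtt{o}_t(w)$--element subset of the $\mathtt{c}(\check{u})_t$ positions at which $\check{u}$ equals $t$: one fills the slots of $(E_i)$ carrying the value $t$ in $w$ with the positions of $S_t$, which gives a hyperplane through $\check{u}$, and conversely the incidence pattern of such a hyperplane with $\check{u}$ recovers the sets $S_t$. Counting these families of sets gives $\prod_{t \in \mathtt{S}(w)} \binom{\mathtt{c}(\check{u})_t}{\mathtt{o}_t(w)} = \mathtt{f}(\mathtt{c}(\check{u}), w)$; summing over $w \in \mathrm{Sol}(\check{E}_i)$ yields $h_i(\check{u}) = \mathtt{f}(\mathtt{c}(\check{u}), \mathrm{Sol}(\check{E}_i))$, and summing over $i$ gives the lemma.

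The step I expect to be delicate is precisely this bijection: that two different slot-assignments of the same chosen position-set never produce two genuinely different hyperplanes of the family, and that no hyperplane of the family through $\check{u}$ is missed. This is where the relation $\sim_{H_i}$ and the \emph{transversal} character of $\mathrm{Sol}(\check{E}_i)$ do the work — two slot-assignments of a given position-set either yield the same linear form, or yield forms differing by a symmetry of the coefficient vector $(c_1, \dots, c_p)$ (equivalently, of $\check{H}_i$), and $\mathrm{Sol}(\check{E}_i)$ already absorbs the latter freedom by retaining one solution per symmetry orbit. To make this airtight I would run a short orbit--stabilizer computation: inside the symmetry group of $(E_i)$, the stabilizer of a solution tuple $w$ must coincide with the group of slot-permutations fixing the value pattern of $w$, so that the multiplicities coming from "$\#\text{orbit}$" and from "$(\#\text{position-sets}) \times (\#\text{fillings})$" cancel to leave exactly $\prod_{t \in \mathtt{S}(w)} \binom{\mathtt{c}(\check{u})_t}{\mathtt{o}_t(w)}$. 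Once this matching of stabilizers is established, the remaining bookkeeping — that the family decomposition of $\check{\mathcal{A}}$ is genuinely disjoint and that solving $(\check{E}_i)$ is invariant along each orbit — is routine.
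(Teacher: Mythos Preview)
Your approach is essentially the paper's: reduce to a single $\mathfrak{S}_n$--orbit $\{\sigma\cdot H\}_{\dot\sigma}$, rewrite $h_i(\check u)$ as $\sum_{\check z\in\mathrm{Sol}(\check E_i)}\big|\{\dot\sigma:(\check u_{\sigma(1)},\dots,\check u_{\sigma(p)})=\check z\}\big|$, and identify each summand with $\prod_{\check k\in\mathtt S(\check z)}\binom{\mathtt o_{\check k}(\check u)}{\mathtt o_{\check k}(\check z)}=\mathtt f(\mathtt c(\check u),\check z)$. The paper runs exactly this argument, in the same order and with the same combinatorial justification, and then sums over the $r$ families.

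Where you go further than the paper is your ``delicate step''. The paper simply asserts the binomial count without checking whether two slot-fillings of the same position-set can yield distinct hyperplanes; you correctly recast this as a stabilizer question inside the symmetry group of $(E_i)$. Your caution is warranted: the coincidence you flag (that the stabilizer of $w$ in the symmetry group of $(E_i)$ equals the full group of slot-permutations fixing the value pattern of $w$) holds automatically when that symmetry group is all of $\mathfrak S_p$ --- which is the case in every representative equation the paper actually uses ($z_1=0$, $z_1=1$, $z_1+z_2=1$, $z_1-z_2=0$) --- but can fail otherwise. For instance, for $(E):z_1+2z_2=3$ the symmetry group is trivial, and with $\check u$ having exactly two coordinates equal to $1$ one finds $h(\check u)=2$ while the formula gives $\mathtt f(\mathtt c(\check u),(1,1))=\binom{2}{2}=1$. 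So your orbit--stabilizer check is precisely what the paper's argument tacitly assumes; read it as an implicit hypothesis on the representative equations (satisfied in all the applications) rather than as a lemma to be proved in full generality.
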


\begin{proof}
First, consider a \textsc{SH}--arrangement $\displaystyle \big\{\sigma \cdot H\big\}_{\dot{\sigma} \in \mathfrak{S}_n^{\sim_H}}$, with representative hyperplane $H$, and representative equation $(E): c_1 z_1 + \dots + c_j z_j = d$. Suppose that $\check{H}$ is the hyperplane $\{\check{c}_1 \check{z}_1 + \dots + \check{c}_j \check{z}_j = \check{d}\}$ with $\check{c}_1, \check{c}_j \neq 0$. Then
\begin{align*}
h(\check{u}) & = \big| \{ \sigma \cdot \check{H}\ |\ \dot{\sigma} \in \mathfrak{S}_n^{\sim_H},\, \check{c}_1 \check{u}_{\sigma(1)} + \dots + \check{c}_j \check{u}_{\sigma(j)} = d \} \big| \\
& = \big| \{ \dot{\sigma} \in \mathfrak{S}_n^{\sim_H}\ |\ \check{c}_1 \check{u}_{\sigma(1)} + \dots + \check{c}_j \check{u}_{\sigma(j)} = d \} \big| \\
& = \big| \{ \dot{\sigma} \in \mathfrak{S}_n^{\sim_H}\ |\ (\check{u}_{\sigma(1)}, \dots, \check{u}_{\sigma(j)}) \in  \mathrm{Sol}(\check{E}) \} \big| \\
& = \sum_{\check{z} \in \mathrm{Sol}(\check{E})} \big| \{ \dot{\sigma} \in \mathfrak{S}_n^{\sim_H}\ |\ (\check{u}_{\sigma(1)}, \dots, \check{u}_{\sigma(j)}) = \check{z} \} \big|.
\end{align*}
One can find a permutation class $\dot{\sigma}$ of $\mathfrak{S}_n^{\sim_H}$ such that $(\check{u}_{\sigma(1)}, \dots, \check{u}_{\sigma(j)}) = \check{z}$ if and only if, for all $\check{k}$ in $\mathtt{S}(\check{z})$, $\mathtt{o}_{\check{k}}(\check{u}) \geq \mathtt{o}_{\check{k}}(\check{z})$. In this case, there are $\displaystyle \binom{\mathtt{o}_{\check{k}}(\check{u})}{\mathtt{o}_{\check{k}}(\check{z})}$ possibilities to choose $\mathtt{o}_{\check{k}}(\check{z})$ coordinates of $\check{z}$ having the value $\check{k}$. Since all the elements of $\mathtt{S}(\check{z})$ should be considered, we obtain
$$\big| \{ \dot{\sigma} \in \mathfrak{S}_n^{\sim_H}\ |\ (\check{u}_{\sigma(1)}, \dots, \check{u}_{\sigma(j)}) = \check{z} \} \big| = \prod_{\check{k} \in \mathtt{S}(\check{z})} \binom{\mathtt{o}_{\check{k}}(\check{u})}{\mathtt{o}_{\check{k}}(\check{z})} = \mathtt{f}\big(\mathtt{c}(\check{u}), \check{z}\big).$$
Thus, we have $$h(\check{u}) = \sum_{\check{z} \in \mathrm{Sol}(\check{E})} \mathtt{f}\big(\mathtt{c}(\check{u}), \check{z}\big) = \mathtt{f}\big(\mathtt{c}(\check{u}), \mathrm{Sol}(\check{E})\big).$$ 
Now, let $\displaystyle \mathcal{A} = \bigsqcup_{i=1}^r \big\{\sigma \cdot H_i\big\}_{\dot{\sigma} \in \mathfrak{S}_n^{\sim_{H_i}}}$, and $h_i(\check{u})$ be the number of hyperplanes in the arrangement $\big\{\sigma \cdot H_i\big\}_{\dot{\sigma} \in \mathfrak{S}_n^{\sim_{H_i}}}$ the element $\check{u}$ lies on. Then, $$h(\check{u}) = \sum_{i=1}^r h_i(\check{u}) = \sum_{i=1}^r \mathtt{f}\big(\mathtt{c}(\check{u}), \mathrm{Sol}(\check{E}_i)\big).$$
\end{proof}

\noindent Let $(E_i)$ be the $r$ representative equations of a \textsc{SH}--arrangement. We partition the set $\displaystyle \bigcup_{i=1}^r \mathrm{Sol}(\check{E}_i)$ into subsets $\mathrm{M}_1, \dots, \mathrm{M}_s$ having the following properties:
\begin{itemize}
\item[$\bullet$] For every element $\check{z}$ in $\mathrm{M}_i$, there exists another element $\check{u}$ in $\mathrm{M}_i$ such that $\mathtt{S}(\check{z}) \cap \mathtt{S}(\check{u}) \neq \emptyset$.
\item[$\bullet$] If $i \neq j$, then, for every $\check{z}$ in $\mathrm{M}_i$, and $\check{u}$ in $\mathrm{M}_j$, we have $\mathtt{S}(\check{z}) \cap \mathtt{S}(\check{u}) = \emptyset$.
\end{itemize}
The partition $\mathrm{M}_1, \dots, \mathrm{M}_s$ is called indice partition of $\displaystyle \bigcup_{i=1}^r \mathrm{Sol}(\check{E}_i)$. 

\noindent For $\mathcal{I}_n$, the indice partition of $\mathrm{Sol}(\check{E}_0) \cup \mathrm{Sol}(\check{E}_1) \cup \mathrm{Sol}(\check{E}_2)$ is
$$M_0 = \big\{\check{0}, \check{1}, (\check{0}, \check{1})\big\},\ M_1 = \Big\{\big(\check{\frac{q+1}{2}}, \check{\frac{q+1}{2}}\big)\Big\},\ \text{and}\ \displaystyle M_i = \big\{(\check{i}, \check{q-i+1})\big\}\ \text{for}\ i \in \big\{2, \dots, \frac{q-1}{2}\big\}.$$

\begin{proposition} \label{PrCob}
Let $\mathcal{A}$ be a \textsc{SH}--arrangement having representative equations whose solution indice partition is $\mathrm{M}_1, \dots, \mathrm{M}_s$. Then its $\zeta_m$--coboundary polynomial is
$$\bar{\chi}_{\mathcal{A}}^{(\zeta_m)}(q,t) = q^{l_m(r(\mathcal{A})-n)} \sum_{\substack{\mathsf{a} \in \mathbb{N}^{\mathbb{F}_q[\zeta_m]} \\ \|\mathsf{a}\| = n}} \binom{n}{\mathsf{a}} t^{\sum_{j=1}^s \mathtt{f}(\mathsf{a}, \mathrm{M}_j)}.$$
\end{proposition}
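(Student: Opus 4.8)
The plan is to feed the counting formula of Theorem~\ref{ThCo} into the combinatorial description of $h$ provided by Lemma~\ref{LeSH}, and then to reorganize the resulting sum over $\mathbb{F}_q[\zeta_m]^n$ by grouping the vectors according to how often each element of $\mathbb{F}_q[\zeta_m]$ occurs among their coordinates. Concretely, I would fix a prime $q$ large enough that $\mathcal{A}$ reduces correctly over $\mathbb{F}_q[\zeta_m]$, so that Theorem~\ref{ThCo} gives
$$q^{l_m(n-r(\mathcal{A}))}\,\bar{\chi}_{\mathcal{A}}^{(\zeta_m)}(q,t) = \sum_{\check{z} \in \mathbb{F}_q[\zeta_m]^n} t^{h(\check{z})},$$
and then invoke Lemma~\ref{LeSH} to identify the exponent as $h(\check{z}) = \sum_{i=1}^r \mathtt{f}\big(\mathtt{c}(\check{z}), \mathrm{Sol}(\check{E}_i)\big)$. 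In particular $h(\check{z})$ depends on $\check{z}$ only through its content sequence $\mathtt{c}(\check{z}) \in \mathbb{N}^{\mathbb{F}_q[\zeta_m]}$, which always has norm $n$.

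Next I would rewrite this exponent using the solution indice partition $\mathrm{M}_1, \dots, \mathrm{M}_s$. Since $\mathtt{f}(\mathsf{a}, M) = \sum_{v \in M} \mathtt{f}(\mathsf{a}, v)$, the map $M \mapsto \mathtt{f}(\mathsf{a}, M)$ is additive over disjoint unions; and since, by the two defining properties of an indice partition, the blocks $\mathrm{M}_1, \dots, \mathrm{M}_s$ partition $\bigcup_{i=1}^r \mathrm{Sol}(\check{E}_i)$, this gives
$$\sum_{i=1}^r \mathtt{f}\big(\mathsf{a}, \mathrm{Sol}(\check{E}_i)\big) = \sum_{j=1}^s \mathtt{f}(\mathsf{a}, \mathrm{M}_j)$$
for every $\mathsf{a} \in \mathbb{N}^{\mathbb{F}_q[\zeta_m]}$, hence $h(\check{z}) = \sum_{j=1}^s \mathtt{f}\big(\mathtt{c}(\check{z}), \mathrm{M}_j\big)$.

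Finally I would group the vectors $\check{z} \in \mathbb{F}_q[\zeta_m]^n$ by their content. For a fixed $\mathsf{a} \in \mathbb{N}^{\mathbb{F}_q[\zeta_m]}$ with $\|\mathsf{a}\| = n$, the number of $\check{z}$ with $\mathtt{c}(\check{z}) = \mathsf{a}$ equals the multinomial coefficient $\binom{n}{\mathsf{a}}$, since such a $\check{z}$ amounts to choosing, among the $n$ coordinate slots, which $\mathsf{a}_{\check{k}}$ of them carry the value $\check{k}$ for each $\check{k} \in \mathbb{F}_q[\zeta_m]$. Combining the three steps,
$$q^{l_m(n-r(\mathcal{A}))}\,\bar{\chi}_{\mathcal{A}}^{(\zeta_m)}(q,t) = \sum_{\substack{\mathsf{a} \in \mathbb{N}^{\mathbb{F}_q[\zeta_m]}\\ \|\mathsf{a}\| = n}} \binom{n}{\mathsf{a}}\, t^{\sum_{j=1}^s \mathtt{f}(\mathsf{a}, \mathrm{M}_j)},$$
and multiplying both sides by $q^{l_m(r(\mathcal{A})-n)}$ yields the claimed formula.

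The step I expect to need the most care is the additivity identity: one must check that the $r$ solution sets $\mathrm{Sol}(\check{E}_i)$ are assembled without overlap, so that the sum $\sum_i \mathtt{f}(\mathsf{a}, \mathrm{Sol}(\check{E}_i))$ coming from Lemma~\ref{LeSH} is exactly $\mathtt{f}(\mathsf{a}, -)$ evaluated on $\bigcup_i \mathrm{Sol}(\check{E}_i)$, and that the indice partition is fine enough that this splits as the sum over the blocks $\mathrm{M}_1, \dots, \mathrm{M}_s$; this is where the two defining properties of the indice partition are used. It is also worth recording that Theorem~\ref{ThCo} is applied only for a sufficiently large prime $q$, which suffices since $\bar{\chi}_{\mathcal{A}}^{(\zeta_m)}(q,t)$ is a polynomial in $q$. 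The remaining ingredients — the multinomial count and the additivity of $\mathtt{f}$ — are routine.
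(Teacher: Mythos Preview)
Your proposal is correct and follows essentially the same route as the paper's own proof: apply Theorem~\ref{ThCo}, rewrite the exponent $h(\check{z})$ via Lemma~\ref{LeSH} and the indice partition, and then collect vectors by their content sequence to produce the multinomial coefficient. You are in fact more explicit than the paper about the additivity step and the passage from $\sum_i \mathrm{Sol}(\check{E}_i)$ to $\bigsqcup_j \mathrm{M}_j$; the paper simply asserts $h(\check{u}) = \sum_{j=1}^s \mathtt{f}\big(\mathtt{c}(\check{u}), \mathrm{M}_j\big)$ citing Lemma~\ref{LeSH} and proceeds directly to the multinomial sum.
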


\begin{proof}
Using the indice partition of $\displaystyle \bigcup_{i=1}^r \mathrm{Sol}(\check{E}_i)$, the number of hyperplanes the element $\check{u}$ of lies on is $$h(\check{u}) = \sum_{j=1}^s \mathtt{f}\big(\mathtt{c}(\check{u}), \mathrm{M}_j\big) \quad \text{(Lemma \ref{LeSH})}.$$
From Theorem \ref{ThCo}, we have
\begin{align*}
q^{l_m(n-r(\mathcal{A}))} \bar{\chi}_{\mathcal{A}}^{(\zeta_m)}(q,t) & = \sum_{\check{u} \in \mathbb{F}_q[\zeta_m]^n} t^{\sum_{j=1}^s \mathtt{f}\big(\mathtt{c}(\check{u}), \mathrm{M}_j\big)} \\
& = \sum_{\substack{\mathsf{a} \in \mathbb{N}^{\mathbb{F}_q[\zeta_m]} \\ \|\mathsf{a}\| = n}} \binom{n}{\mathsf{a}} t^{\sum_{j=1}^s \mathtt{f}(\mathsf{a}, \mathrm{M}_j)}.
\end{align*}
\end{proof}

\noindent With $r(\mathcal{I}_n)=n$, the coboundary polynomial of $\mathcal{I}_n$ is
$$\bar{\chi}_{\mathcal{I}_n}(q,t) = \sum_{a_0 + \dots + a_{q-1} = n} \binom{n}{a_0, \dots, a_{q-1}} t^{a_0 + a_1 + a_0 a_1}\ t^{\displaystyle \binom{a_{\frac{q+1}{2}}}{2}}\ \prod_{i=2}^{\frac{q-1}{2}} t^{a_i a_{q-i+1}}.$$

\noindent Take $r$ hyperplanes $H_i = \{c_1^{(i)} z_1 + \dots + c_{i_j}^{(i)} z_{i_j} = d^{(i)}\}$ with $c_{i_j}^{(i)} \neq 0$. Setting $h = \max(i_1, \dots, i_r)$, define a sequence of \textsc{SH}--arrangements with representative hyperplanes $H_1, \dots, H_r$ by a sequence of hyperplane arrangements $(\mathcal{A}_n)_{n \geq h}$ in $(\mathbb{C}^n)_{n \geq h}$ such that $$\mathcal{A}_n = \bigsqcup_{i=1}^r  \big\{\sigma \cdot H_i \big\}_{\dot{\sigma} \in \mathfrak{S}_n^{\sim_{H_i}}}.$$

\begin{proposition} \label{PrSy}
Let $(\mathcal{A}_n)_{n \geq h}$ be a sequence of \textsc{SH}--arrangements having the same representative equations, and $\mathrm{M}_1, \dots, \mathrm{M}_s$ their common solution indice partition. Consider the EGF
$$g(x) = \sum_{n \in \mathbb{N}} g_n \frac{x^n}{n!} \quad \text{with} \quad g(x) = \prod_{j=1}^s \ \sum_{n \in \mathbb{N}} \Big( \sum_{\substack{\mathsf{a} \in \mathbb{N}^{\mathtt{S}(\mathrm{M}_j)} \\ \|\mathsf{a}\| = n}} \binom{n}{\mathsf{a}} t^{\mathtt{f}(\mathsf{a}, \mathrm{M}_j)} \Big) \frac{x^n}{n!}.$$
Then, $$\sum_{n \geq h} q^{l_m(n-r(\mathcal{A}_n))} \bar{\chi}_{\mathcal{A}_n}^{(\zeta_m)}(q,t) \frac{x^n}{n!} = g(x) - \sum_{n=0}^{h-1} g_n \frac{x^n}{n!}.$$
\end{proposition}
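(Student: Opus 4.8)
The plan is to combine Proposition~\ref{PrCob} with the generalized convolution formula of Proposition~\ref{PrPrin}. By Proposition~\ref{PrCob}, for each $n \geq h$ we have
$$q^{l_m(n-r(\mathcal{A}_n))} \bar{\chi}_{\mathcal{A}_n}^{(\zeta_m)}(q,t) = \sum_{\substack{\mathsf{a} \in \mathbb{N}^{\mathbb{F}_q[\zeta_m]} \\ \|\mathsf{a}\| = n}} \binom{n}{\mathsf{a}} t^{\sum_{j=1}^s \mathtt{f}(\mathsf{a}, \mathrm{M}_j)},$$
so I would show that the right-hand side is exactly the $n$-th coefficient $u_n$ of the product EGF $g(x) = v^{(1)}(x)\cdots v^{(s)}(x)$, where $v^{(j)}(x) = \sum_{n} \big(\sum_{\|\mathsf{a}\|=n,\ \mathsf{a}\in\mathbb{N}^{\mathtt{S}(\mathrm{M}_j)}} \binom{n}{\mathsf{a}} t^{\mathtt{f}(\mathsf{a},\mathrm{M}_j)}\big)\frac{x^n}{n!}$.

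The key step is to re-index the sum over $\mathsf{a} \in \mathbb{N}^{\mathbb{F}_q[\zeta_m]}$ according to the blocks $\mathrm{M}_1, \dots, \mathrm{M}_s$ of the solution indice partition. Since the supports $\mathtt{S}(\mathrm{M}_1), \dots, \mathtt{S}(\mathrm{M}_s)$ are pairwise disjoint by the defining property of an indice partition, a sequence $\mathsf{a} = (\mathsf{a}_{\check k})_{\check k \in \mathbb{F}_q[\zeta_m]}$ restricts to a sequence $\mathsf{a}^{(j)} := (\mathsf{a}_{\check k})_{\check k \in \mathtt{S}(\mathrm{M}_j)}$ on each block, together with residual coordinates indexed by $\mathbb{F}_q[\zeta_m] \setminus \bigcup_j \mathtt{S}(\mathrm{M}_j)$. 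Here I would use the two facts that make everything factor: first, $\mathtt{f}(\mathsf{a}, \mathrm{M}_j)$ depends only on $\mathsf{a}^{(j)}$, because $\mathtt{f}(\mathsf{a}, v) = \prod_{t \in \mathtt{S}(v)} \binom{\mathsf{a}_t}{\mathtt{o}_t(v)}$ and $\mathtt{S}(v) \subseteq \mathtt{S}(\mathrm{M}_j)$ for $v \in \mathrm{M}_j$; second, the multinomial coefficient factors as $\binom{n}{\mathsf{a}} = \binom{n}{\|\mathsf{a}^{(1)}\|, \dots, \|\mathsf{a}^{(s)}\|, \ast} \prod_{j=1}^s \binom{\|\mathsf{a}^{(j)}\|}{\mathsf{a}^{(j)}} \cdot (\text{residual multinomial})$, but since each residual coordinate contributes nothing to the $t$-exponent, summing over all residual configurations with a fixed total collapses that part. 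Concretely, writing $n_0$ for the total mass on the residual indices, the sum over residual sequences of $\binom{n_0}{\ast}$ over all compositions equals $(q^{l_m} - |\bigcup_j \mathtt{S}(\mathrm{M}_j)|)^{n_0}$ — and I would check that in fact, because $\check 0$ and the representative solutions are arranged so that $\bigcup_j \mathtt{S}(\mathrm{M}_j) = \mathbb{F}_q[\zeta_m]$ in the relevant normalization (or, more cleanly, that the residual part with positive mass never occurs for the solution sets of a \textsc{SH}-arrangement), there is no residual contribution; the cleanest route is to observe that every element of $\mathbb{F}_q[\zeta_m]$ lies in some $\mathtt{S}(\mathrm{M}_j)$ once we include the block $\mathrm{M}_1$ containing $\check 0$, so $\mathbb{N}^{\mathbb{F}_q[\zeta_m]} = \prod_j \mathbb{N}^{\mathtt{S}(\mathrm{M}_j)}$ and no residual indices appear at all.

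With this product decomposition, $u_n = \sum_{\|\mathsf{a}^{(1)}\| + \dots + \|\mathsf{a}^{(s)}\| = n} \binom{n}{\mathsf{a}^{(1)} \cdots \mathsf{a}^{(s)}} \prod_{j=1}^s \binom{\|\mathsf{a}^{(j)}\|}{\mathsf{a}^{(j)}}^{-1}\binom{\|\mathsf{a}^{(j)}\|}{\mathsf{a}^{(j)}} t^{\mathtt{f}(\mathsf{a}^{(j)}, \mathrm{M}_j)}$ — more precisely, setting $p^{(j)}_{\mathsf{a}^{(j)}} := t^{\mathtt{f}(\mathsf{a}^{(j)}, \mathrm{M}_j)} \in \mathbb{R}[t]$, the hypothesis of Proposition~\ref{PrPrin} is met verbatim, so Proposition~\ref{PrPrin} yields EGFs $v^{(j)}(x)$ with $v^{(j)}_n = \sum_{\|\mathsf{a}^{(j)}\| = n} \binom{n}{\mathsf{a}^{(j)}} t^{\mathtt{f}(\mathsf{a}^{(j)}, \mathrm{M}_j)}$ and $g(x) := \sum_n u_n \frac{x^n}{n!} = v^{(1)}(x) \cdots v^{(s)}(x)$, which is exactly the stated $g(x)$. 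Finally, since $u_n = q^{l_m(n-r(\mathcal{A}_n))} \bar{\chi}_{\mathcal{A}_n}^{(\zeta_m)}(q,t)$ holds only for $n \geq h$ (the arrangements $\mathcal{A}_n$ being defined only then), I would subtract off the low-order terms: $\sum_{n \geq h} q^{l_m(n-r(\mathcal{A}_n))} \bar{\chi}_{\mathcal{A}_n}^{(\zeta_m)}(q,t) \frac{x^n}{n!} = g(x) - \sum_{n=0}^{h-1} g_n \frac{x^n}{n!}$, which is the claim.

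The main obstacle is the bookkeeping in the re-indexing step: verifying carefully that $\mathtt{f}(\mathsf{a}, \mathrm{M}_j)$ genuinely depends only on the restricted sequence $\mathsf{a}^{(j)}$, that the global multinomial $\binom{n}{\mathsf{a}}$ splits precisely into the inter-block multinomial times the intra-block multinomials (using disjointness of the $\mathtt{S}(\mathrm{M}_j)$), and that the index set $\mathbb{F}_q[\zeta_m]$ is indeed exhausted by $\bigcup_j \mathtt{S}(\mathrm{M}_j)$ so that the sum over $\mathsf{a} \in \mathbb{N}^{\mathbb{F}_q[\zeta_m]}$ with $\|\mathsf{a}\| = n$ is the same as the sum over tuples $(\mathsf{a}^{(1)}, \dots, \mathsf{a}^{(s)})$ with $\sum_j \|\mathsf{a}^{(j)}\| = n$. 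Once that identification is in place, Proposition~\ref{PrPrin} does all the analytic work and the rest is immediate.
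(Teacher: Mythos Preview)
Your approach is exactly the paper's: invoke Proposition~\ref{PrCob} to get the closed form for $q^{l_m(n-r(\mathcal{A}_n))}\bar{\chi}_{\mathcal{A}_n}^{(\zeta_m)}(q,t)$, then apply Proposition~\ref{PrPrin} to factor the resulting EGF as the product $\prod_{j=1}^s v^{(j)}(x)$, and finally subtract the terms with $n<h$. The paper's proof is much terser than yours: it simply writes ``We deduce from Proposition~\ref{PrPrin} that $g(x)=\prod_j(\cdots)$'' without any discussion of how the index set $\mathbb{F}_q[\zeta_m]$ is split among the blocks $\mathtt{S}(\mathrm{M}_j)$.

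You go further than the paper by explicitly raising the question of ``residual'' indices, i.e.\ elements of $\mathbb{F}_q[\zeta_m]\setminus\bigcup_{j}\mathtt{S}(\mathrm{M}_j)$. Your proposed resolution, however, is not correct in general: the claim that $\bigcup_j\mathtt{S}(\mathrm{M}_j)=\mathbb{F}_q[\zeta_m]$ fails already for the \textsc{SH}--arrangement $\{z_i=0\}_{i\in[n]}$, where the single representative equation $z_1=0$ gives $\bigcup_j\mathtt{S}(\mathrm{M}_j)=\{\check 0\}$. When a residual set $R$ is present, applying Proposition~\ref{PrPrin} with one extra block for $R$ (and $p^{(s+1)}\equiv 1$) produces an additional factor $\sum_n |R|^{\,n}x^n/n!=e^{|R|x}$ on the right-hand side. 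The paper's own proof passes over this same point in silence, so your argument matches the paper's at the level of what is actually written; you have simply surfaced an implicit assumption that neither proof verifies.
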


\begin{proof}
From Proposition \ref{PrCob}, we know that $$q^{l_m(n-r(\mathcal{A}_n))} \bar{\chi}_{\mathcal{A}_n}^{(\zeta_m)}(q,t) = \sum_{\substack{\mathsf{a} \in \mathbb{N}^{\mathbb{F}_q[\zeta_m]} \\ \|\mathsf{a}\| = n}} \binom{n}{\mathsf{a}} t^{\sum_{j=1}^s \mathtt{f}(\mathsf{a}, \mathrm{M}_j)}.$$
Set the EGF $\displaystyle g(x) = \sum_{n \in \mathbb{N}} g_n \frac{x^n}{n!}$ with
$\displaystyle g_n = \sum_{\substack{\mathsf{a} \in \mathbb{N}^{\mathbb{F}_q[\zeta_m]} \\ \|\mathsf{a}\| = n}} \binom{n}{\mathsf{a}} t^{\sum_{j=1}^s \mathtt{f}(\mathsf{a}, \mathrm{M}_j)}$. We deduce from Proposition~\ref{PrPrin} that $$g(x) = \prod_{j=1}^s \ \sum_{n \in \mathbb{N}} \Big( \sum_{\substack{\mathsf{a} \in \mathbb{N}^{\mathtt{S}(\mathrm{M}_j)} \\ \|\mathsf{a}\| = n}} \binom{n}{\mathsf{a}} t^{\mathtt{f}(\mathsf{a}, \mathrm{M}_j)} \Big) \frac{x^n}{n!}.$$
Since $\bar{\chi}_{\mathcal{A}_n}^{(\zeta_m)}(q,t)$ is not defined for $n < h$, and $q^{l_m(n-r(\mathcal{A}_n))} \bar{\chi}_{\mathcal{A}_n}^{(\zeta_m)}(q,t) = g_n$ for $n \geq h$, we get the result.
\end{proof}

\noindent Letting $\mathcal{I}_1 = \big\{\{0\}, \{1\}\big\}$, the EGF of the coboundary polynomials of $\mathcal{I}_n$ is
\begin{align*}
\sum_{n \in \mathbb{N}} q \bar{\chi}_{\mathcal{I}_n}(q,t) \frac{x^n}{n!}\ =\ & \Big( \sum_{n \in \mathbb{N}} t^{\binom{n}{2}} \frac{x^n}{n!} \Big) \times \Big( \sum_{n \in \mathbb{N}} \big( \sum_{a+b=n} \binom{n}{a,b} t^{a+b+ab} \big) \frac{x^n}{n!} \Big) \\
& \times \Big( \sum_{n \in \mathbb{N}} \big( \sum_{a+b=n} \binom{n}{a,b} t^{ab} \big) \frac{x^n}{n!} \Big)^{\frac{q-3}{2}}.
\end{align*}

\section{The $\zeta_m$--Coboundary Polynomial of a \textsc{CSH}--Arrangement} \label{Colored}

\noindent This section describes the most interesting part of this article, namely the $\zeta_m$--coboundary polynomial of a \textsc{CSH}--arrangement. We deduce the exponential generating function of a sequence of \textsc{CSH}--arrangement coboundary polynomials. Then, we apply the results on the hyperplane arrangement $\mathcal{G}_{m,p,n}$. Shephard and Todd extended in 1954 the concept of reflection in an Euclidean space to reflection in a Hermitian space \cite{ShTo}. Recall that a complex reflection $r_{a, \alpha}$ of order $m$ is a linear transformation on $\mathbb{C}^n$ which sends some nonzero vector $a$ to $\alpha a$, where $\alpha$ is a primitive $m^{\text{th}}$ root of unity, while fixing pointwise the hyperplane $H_a = \mathrm{Fix}\,r_{a, \alpha}$ orthogonal to $a$. It has the formula $\displaystyle r_{a, \alpha} := v - (1 - \alpha) \frac{(v,a)}{(a,a)}a$. The complex reflection groups are the finite groups generated by complex reflections. They naturally contain the real reflection groups, and have a wide range of applications, including knot theory, Hecke algebras, and differential equations \cite{St}. We focus on the imprimitive reflection groups, or the complex reflection groups whose action representations permute subspaces among themselves, which are the groups $G(m,p,n)$ with $p$ dividing $m$.

\smallskip

\noindent For the presentations and explanations, we use both colored permutations of $\mathbb{U}_m \wr \mathfrak{S}_n$
$$\pi = \left( \begin{array}{cccc} 1 & 2 & \dots & n\\
\xi_1 \sigma(1) & \xi_2 \sigma(2) & \dots & \xi_n \sigma(n) \end{array} \right)\quad \text{and}\quad
\phi = \left( \begin{array}{cccc} 1 & 2 & \dots & n\\
\nu_1 \tau(1) & \nu_2 \tau(2) & \dots & \nu_n \tau(n) \end{array} \right).$$

\begin{definition}
We can choose some hyperplanes $H_1, \dots, H_r$ of a \textsc{CSH}--arrangement $\mathcal{A}$ in $\mathbb{C}^n$ to write it in the form
$$\mathcal{A} = \bigsqcup_{i=1}^r \big\{\pi \cdot H_i\big\}_{\pi \in \mathbb{U}_m \wr \mathfrak{S}_n}.$$ 
The hyperplanes $H_1, \dots, H_r$ are called representative hyperplanes for $\mathcal{A}$.
\end{definition}

\noindent Recall that the hyperplane arrangement associated to $G(m,p,n)$ is
$$\mathcal{G}_{m,p,n} = \big\{\pi \cdot \{z_i = 0\}\big\}_{\pi \in \mathbb{U}_m \wr \mathfrak{S}_n} \sqcup \big\{\pi \cdot \{z_i - z_j = 0\}\big\}_{\pi \in \mathbb{U}_m \wr \mathfrak{S}_n}.$$

\begin{definition}
\noindent Let $\mathcal{A}$ be a \textsc{CSH}--arrangement in $\mathbb{C}^n$, and $H_i$ a representative hyperplane. A representative equation of $\mathcal{A}$ associated to $H_i$ is a multivariate equation
$$(E_i): c_1^{(i)} z_1 + \dots + c_{i_j}^{(i)} z_{i_j} = d^{(i)}, \quad 1 \leq i_j \leq n, \quad c_1^{(i)}, c_{i_j}^{(i)} \neq 0$$ such that for all hyperplane $H$ of a subarrangement $\{\pi \cdot H_i\}_{\pi \in \mathbb{U}_m \wr \mathfrak{S}_n}$ of $\mathcal{A}$, there exists a colored permutation $\phi$ such that
$H = \{c_1^{(i)} \nu_1 z_{\tau(1)} + \dots + c_{i_j}^{(i)} \nu_{i_j} z_{\tau(i_j)} = d^{(i)}\}$.
\end{definition}

\noindent The representative equations of $\mathcal{G}_{m,p,n}$ are $(E_1): z_1 = 0$, and $(E_2): z_1 - z_2 = 0$.

\begin{definition}
Take a \textsc{CSH}--arrangement $\displaystyle \mathcal{A} = \bigsqcup_{i=1}^r \big\{\pi \cdot H_i\big\}_{\pi \in \mathbb{U}_m \wr \mathfrak{S}_n}$ in $\mathbb{C}^n$. For each representative hyperplane $H_i = \{c_1^{(i)} z_1 + \dots + c_{i_j}^{(i)} z_{i_j} = d^{(i)}\}$, we define an equivalence relation $\sim_{H_i}$ on $\mathbb{U}_m \wr \mathfrak{S}_n$ by
$$\pi \sim_{H_i} \phi \ \Longleftrightarrow \ c_1^{(i)} \xi_1 z_{\sigma(1)} + \dots + c_{i_j}^{(i)} \xi_{i_j} z_{\sigma(i_j)} = c_1^{(i)} \nu_1 z_{\tau(1)} + \dots + c_{i_j}^{(i)} \nu_{i_j} z_{\tau(i_j)}.$$
For simplicity, we write $\mathbb{U}_m \wr \mathfrak{S}_n^{\sim_{H_i}}$ for the equivalence class set instead of $\mathbb{U}_m \wr \mathfrak{S}_n/\sim_{H_i}$. Then, we have $$\mathcal{A} = \bigsqcup_{i=1}^r  \big\{\pi \cdot H_i \big\}_{\dot{\pi} \in \mathbb{U}_m \wr \mathfrak{S}_n^{\sim_{H_i}}}.$$
\end{definition}

\noindent For the hyperplane arrangement associated to $G(m,p,n)$, we have
$$\mathcal{G}_{m,p,n} = \big\{\pi \cdot \{z_i = 0\}\big\}_{\dot{\pi} \in \mathbb{U}_m \wr \mathfrak{S}_n^{\sim_{\{z_i = 0\}}}} \sqcup \big\{\pi \cdot \{z_i - z_j = 0\}\big\}_{\dot{\pi} \in \mathbb{U}_m \wr \mathfrak{S}_n^{\sim_{\{z_i - z_j = 0\}}}}.$$

\noindent Define the set $\mathbb{U}_m \ast v := \big\{ (\xi_1 v_1, \dots, \xi_n v_n)\ |\ \xi_1, \dots, \xi_n \in \mathbb{U}_m \big\}$. More generally, for a subset $M$ of $\displaystyle \bigsqcup_{n \in \mathbb{N}} \mathbb{S}^n$, define the set $$\mathbb{U}_m \ast M := \bigcup_{v \in M} \mathbb{U}_m \ast v.$$

\begin{lemma} \label{LeEq}
Consider a \textsc{CSH}--arrangement $\mathcal{A}$ in $\mathbb{C}^n$ with representative hyperplanes $H_1, \dots, H_r$. Take a vector $\check{u}$ of $\mathbb{F}_q[\zeta_m]^n$. Then,
$$\forall \check{x}, \check{y} \in \mathbb{U}_m \ast \check{u},\ h(\check{x}) = h(\check{y}).$$
\end{lemma}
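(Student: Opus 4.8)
The goal is to show that the number of hyperplanes of $\check{\mathcal{A}}$ that a vector lies on is constant on each orbit $\mathbb{U}_m \ast \check{u}$. The natural approach is to exhibit, for any two vectors $\check{x}, \check{y} \in \mathbb{U}_m \ast \check{u}$, a colored permutation of $\mathbb{U}_m \wr \mathfrak{S}_n$ carrying $\check{x}$ to $\check{y}$, and then argue that such a colored permutation induces a bijection between the set of hyperplanes of $\check{\mathcal{A}}$ through $\check{x}$ and the set of hyperplanes through $\check{y}$. First I would write $\check{x} = (\xi_1 \check{u}_1, \dots, \xi_n \check{u}_n)$ and $\check{y} = (\nu_1 \check{u}_1, \dots, \nu_n \check{u}_n)$ with all $\xi_i, \nu_i \in \mathbb{U}_m$, and set $\eta_i := \nu_i \xi_i^{-1} \in \mathbb{U}_m$, so that the diagonal colored permutation $\pi_\eta$ (identity underlying permutation, colors $\eta_1, \dots, \eta_n$) satisfies $\check{y}_i = \eta_i \check{x}_i$ componentwise.

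The key step is then the following observation: if $\check{H} = \{\check{c}_1 \check{z}_{\tau(1)} + \dots + \check{c}_{i_j} \check{z}_{\tau(i_j)} = \check{d}\}$ is a hyperplane of $\check{\mathcal{A}}$ that $\check{x}$ lies on, then applying $\pi_\eta$ to $\check{H}$ produces another hyperplane $\pi_\eta \cdot \check{H}$, and since $\mathcal{A}$ — hence $\check{\mathcal{A}}$, by the correct-reduction isomorphism — is a \textsc{CSH}--arrangement, $\pi_\eta \cdot \check{H} \in \check{\mathcal{A}}$. Moreover the vector $\pi_\eta^{-1} \cdot \check{x}$ lies on $\check{H}$ if and only if $\check{x}$ lies on $\pi_\eta \cdot \check{H}$; one checks that with the colors chosen as above, $\check{y}$ lies on $\pi_\eta \cdot \check{H}$ precisely because $\check{x}$ lies on $\check{H}$ (the substitution $\check{z}_i \mapsto \eta_i \check{z}_i$ in the equation of $\check{H}$, evaluated at $\check{y}$, reproduces the equation of $\check{H}$ evaluated at $\check{x}$). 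This gives an injection $\check{H} \mapsto \pi_\eta \cdot \check{H}$ from $H(\check{x})$ into $H(\check{y})$; applying the same argument with $\eta^{-1}$ in place of $\eta$ yields the reverse injection, so $|H(\check{x})| = |H(\check{y})|$, i.e. $h(\check{x}) = h(\check{y})$.

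I would organize the write-up by first reducing to the case where $\check{x}$ and $\check{y}$ differ by a single diagonal colored permutation, then proving the bijection $H(\check{x}) \leftrightarrow H(\check{y})$ via $\check{H} \mapsto \pi_\eta \cdot \check{H}$. The main subtlety to be careful about is the bookkeeping of which color multiplies which coordinate: a hyperplane of $\check{\mathcal{A}}$ comes with a fixed indexing of its nonzero coefficients to coordinate slots, and one must verify that the \textsc{CSH}--invariance is being invoked with exactly the colored permutation that matches $\check{x}$ to $\check{y}$. In particular, one should note that $d^{(i)} = \check{d}$ is unchanged by the action (colored permutations fix the constant term), so no issue arises there. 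I expect this coefficient-versus-coordinate bookkeeping — essentially the same kind of argument as in the proof of Lemma~\ref{LeSH} but now with colors — to be the only real obstacle; everything else is a direct unwinding of the definitions of the $\mathbb{U}_m \wr \mathfrak{S}_n$--action and of $\check{\mathcal{A}}$ being a \textsc{CSH}--arrangement.
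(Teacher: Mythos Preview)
Your approach is essentially identical to the paper's: both construct the bijection $H(\check{x}) \to H(\check{y})$ by acting with a diagonal colored permutation. The one slip is the direction of the colors: with the paper's convention $\pi \cdot H := \{c_1 \xi_1 z_{\sigma(1)} + \dots + c_n \xi_n z_{\sigma(n)} = d\}$, if $\check{y}_i = \eta_i \check{x}_i$ then $\check{y}$ lies on $\pi_{\eta^{-1}} \cdot \check{H}$ (not $\pi_\eta \cdot \check{H}$) whenever $\check{x}$ lies on $\check{H}$, since $\sum c_i \eta_i^{-1} \check{y}_i = \sum c_i \check{x}_i$. The paper accordingly acts by the conjugate colors $\bar{\xi}_i$; once you insert this inverse, your argument and the paper's coincide.
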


\begin{proof}
Denote by $\mathcal{A}(\check{x})$ resp. $\mathcal{A}(\check{y})$ the set of hyperplanes in $\mathcal{A}$ containing $\check{x} = (\check{x}_1, \dots, \check{x}_n)$ resp. $\check{y} = (\check{y}_1, \dots, \check{y}_n)$. There are elements $\xi_i$ of $\mathbb{U}_m$ such that $\check{y}_i = \xi_i \check{x}_i$. Since the following function 
$$g_{\check{x}, \check{y}}: \left. \begin{array}{ccc} \mathcal{A}(\check{x}) & \rightarrow & \mathcal{A}(\check{y}) \\ 
\pi \cdot H_i & \mapsto & \left( \begin{array}{cccc} 1 & 2 & \dots & n\\
\bar{\xi}_1 1 & \bar{\xi}_2 2 & \dots & \bar{\xi}_n n \end{array} \right) \cdot \pi \cdot H_i \end{array} \right.,$$
where $\bar{\xi}_i$ is the conjugate of $\xi$, is bijective, then $|\mathcal{A}(\check{x})| = |\mathcal{A}(\check{y})|$.
\end{proof}

\noindent Lemma \ref{LeEq} leads us to define the equivalence relation $\sim_{\ast}$ on $\mathbb{F}_q[\zeta_m]^n$ by
$$\check{x} \sim_{\ast} \check{y} \ \Longleftrightarrow \ \check{x} \in \mathbb{U}_m \ast \check{y}.$$
We write $\ddot{x}$ for the set $\{\check{y} \in \mathbb{F}_q[\zeta_m]^n\ |\ \check{x} \sim_{\ast} \check{y}\}$, and $\mathbb{F}_q[\zeta_m]_{\ast}^n$ for the equivalence class set $\mathbb{F}_q[\zeta_m]^n/\sim_{\ast}$. More generally, for a subset $V$ of $\displaystyle \bigsqcup_{n \in \mathbb{N}} \mathbb{F}_q[\zeta_m]^n$, define the set $$V_{\ast} := \{\ddot{v}\ |\ v \in V\}.$$

\noindent For $\mathcal{G}_{m,p,n}$, we have $\mathrm{Sol}(\check{E}_1) = \{\check{0}\}$ and $\mathrm{Sol}(\check{E}_2) = \big\{ (\check{k}, \check{k})\ |\ \check{k} \in \mathbb{F}_q[\zeta_m] \big\}$, that means
$$\mathrm{Sol}(\check{E}_1)_{\ast} = \{\ddot{0}\} \quad \text{and} \quad \mathrm{Sol}(\check{E}_2)_{\ast} = \big\{ (\ddot{k}, \ddot{k})\ |\ \ddot{k} \in \mathbb{F}_q[\zeta_m]_{\ast} \big\}.$$

\begin{proposition} \label{PrCSH}
Let $\mathcal{A}$ be a \textsc{CSH}--arrangement in $\mathbb{C}^n$ having $r$ representative equations $(E_i): c_1^{(i)} z_1 + \dots + c_{i_j}^{(i)} z_{i_j} = d^{(i)},\ 1 \leq i_j \leq n,\ c_1^{(i)}, c_{i_j}^{(i)} \neq 0$. Then, the number of hyperplanes of the arrangement $\check{\mathcal{A}}$ an element $\check{u}$ of $\mathbb{F}_q[\zeta_m]^n$ lies on is
$$h(\check{u})=\sum_{i=1}^r \mathtt{f}_{\ddot{0}}^{(m)}\big(\mathtt{c}(\ddot{u}), \mathrm{Sol}(\check{E}_i)_{\ast}\big) \quad \text{with} \quad \mathtt{c}(\ddot{u}) = \big(\mathtt{o}_{\ddot{k}}(\ddot{u})\big)_{\ddot{k} \in \mathbb{F}_q[\zeta_m]_{\ast}}.$$ 
\end{proposition}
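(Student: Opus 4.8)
The plan is to follow the proof of Lemma~\ref{LeSH} almost line for line, inserting the two features that are new in the colored setting: the colors $\xi_k \in \mathbb{U}_m$ carried by the positions, and the collapse of the count onto $\sim_\ast$--classes supplied by Lemma~\ref{LeEq}. First I would reduce, exactly as in Lemma~\ref{LeSH}, to a single orbit $\big\{\pi \cdot H\big\}_{\dot\pi \in \mathbb{U}_m \wr \mathfrak{S}_n^{\sim_H}}$ with representative equation $(E): c_1 z_1 + \dots + c_j z_j = d$, $c_k \neq 0$; since $h(\check u) = \sum_{i=1}^r h_i(\check u)$, it suffices to identify each $h_i(\check u)$ with $\mathtt{f}_{\ddot 0}^{(m)}\big(\mathtt{c}(\ddot u), \mathrm{Sol}(\check E_i)_\ast\big)$. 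For one orbit, a hyperplane $\pi \cdot \check H$ passes through $\check u$ if and only if $(\xi_1 \check u_{\sigma(1)}, \dots, \xi_j \check u_{\sigma(j)}) \in \mathrm{Sol}(\check E)$, so $h(\check u) = \big|\{\dot\pi \in \mathbb{U}_m \wr \mathfrak{S}_n^{\sim_H} : (\xi_1 \check u_{\sigma(1)}, \dots, \xi_j \check u_{\sigma(j)}) \in \mathrm{Sol}(\check E)\}\big|$. By Lemma~\ref{LeEq} this number depends only on $\ddot u$, and I would partition the count according to the $\sim_\ast$--class $\ddot z$ of the recoloured selection; identifying such a class with the tuple $(\ddot z_1, \dots, \ddot z_j)$ of the classes of its entries (the $\sim_\ast$--class of a tuple being the product of the classes of its coordinates) gives $h(\check u) = \sum_{\ddot z \in \mathrm{Sol}(\check E)_\ast} N(\ddot z)$, where $N(\ddot z)$ counts the classes $\dot\pi$ for which $(\xi_1 \check u_{\sigma(1)}, \dots, \xi_j \check u_{\sigma(j)})$ lands in $\ddot z \cap \mathrm{Sol}(\check E)$.

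The core of the argument is then the identity $N(\ddot z) = \mathtt{f}_{\ddot 0}^{(m)}\big(\mathtt{c}(\ddot u), \ddot z\big)$. For a position $k$ with $\ddot z_k \neq \ddot 0$, one must send $\sigma(k)$ to one of the $\mathtt{o}_{\ddot z_k}(\ddot u)$ coordinates of $\check u$ lying in the class $\ddot z_k$, after which $\xi_k$ is forced, because multiplication by a fixed element of $\mathbb{U}_m$ is invertible on $\mathbb{F}_q[\zeta_m]$; for a position $k$ with $\ddot z_k = \ddot 0$, one must send $\sigma(k)$ to one of the $\mathtt{o}_{\ddot 0}(\ddot u)$ zero coordinates of $\check u$, and now $\xi_k$ is unconstrained, contributing a factor $m$. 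Multiplying over $k$ and dividing out the overcount coming from reorderings inside blocks of equal values — the very passage from falling factorials to binomials already carried out in Lemma~\ref{LeSH} — produces $m^{\mathtt{o}_{\ddot 0}(\ddot z)}\,\mathtt{f}\big(\mathtt{c}(\ddot u), \ddot z\big)$. The single exception is $\ddot z = (\ddot 0, \dots, \ddot 0)$: there $d = 0$, the equation is homogeneous, and simultaneously scaling all $j$ colors by a common element of $\mathbb{U}_m$ yields the very same hyperplane, so exactly one factor $m$ disappears and $N(\ddot z) = m^{j-1}\binom{\mathtt{o}_{\ddot 0}(\ddot u)}{j}$. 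These two alternatives are precisely the two branches of $\mathtt{f}_{\ddot 0}^{(m)}$, with $n = j$ reading as the length of the tuple.

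Summing over $\ddot z$ gives $h_i(\check u) = \sum_{\ddot z \in \mathrm{Sol}(\check E_i)_\ast} \mathtt{f}_{\ddot 0}^{(m)}\big(\mathtt{c}(\ddot u), \ddot z\big) = \mathtt{f}_{\ddot 0}^{(m)}\big(\mathtt{c}(\ddot u), \mathrm{Sol}(\check E_i)_\ast\big)$, and adding over the $r$ representative equations yields the asserted formula. I expect the delicate point to be the middle paragraph: keeping the free colors at the zero positions cleanly separated from the relation $\sim_H$ on colored permutations, and pinning down exactly when the homogeneous rescaling identifies two classes $\dot\pi$ — this is what distinguishes the exponent $m^{j-1}$ (all target coordinates zero) from $m^{\mathtt{o}_{\ddot 0}(\ddot z)}$ (otherwise) in the definition of $\mathtt{f}_{\ddot 0}^{(m)}$, and it is the only place where the colored case genuinely departs from Lemma~\ref{LeSH}.
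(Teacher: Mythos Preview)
Your proposal is correct and follows essentially the same route as the paper: reduce to a single orbit, rewrite $h(\check u)$ as a count of colored-permutation classes whose recoloured $j$-tuple lands in $\mathrm{Sol}(\check E)$, then split into the two cases ``all coordinates zero'' versus ``not'' to obtain the two branches of $\mathtt{f}_{\ddot 0}^{(m)}$, and finally sum over the representative equations. The only cosmetic difference is that you group the count by the $\sim_\ast$-class $\ddot z$ from the outset, whereas the paper sums over individual $\check z \in \mathrm{Sol}(\check E)$ and passes to $\mathrm{Sol}(\check E)_\ast$ at the end; your added explanation of the $m^{j-1}$ factor via homogeneous rescaling is a nice clarification the paper leaves implicit.
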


\begin{proof}
Consider first a \textsc{CSH}--arrangement $\{\pi \cdot H\}_{\dot{\pi} \in \mathbb{U}_m \wr \mathfrak{S}_n^{\sim_H}}$, with representative hyperplane $H$, and representative equation $(E): c_1 z_1 + \dots + c_j z_j = d$. Suppose that $\check{H}$ is the hyperplane $\{\check{c}_1 \check{z}_1 + \dots + \check{c}_j \check{z}_j = \check{d}\}$ with $\check{c}_1, \check{c}_j \neq 0$. Then
\begin{align*}
h(\check{u}) & = \big| \{ \pi \cdot \check{H}\ |\ \dot{\pi} \in \mathbb{U}_m \wr \mathfrak{S}_n^{\sim_H},\, \check{c}_1 \xi_1 \check{u}_{\sigma(1)} + \dots + \check{c}_j \xi_j \check{u}_{\sigma(j)} = d \} \big| \\
& = \big| \{ \dot{\pi} \in \mathbb{U}_m \wr \mathfrak{S}_n^{\sim_H}\ |\ \check{c}_1 \xi_1 \check{u}_{\sigma(1)} + \dots + \check{c}_j \xi_j \check{u}_{\sigma(j)} = d \} \big| \\
& = \big| \{ \dot{\pi} \in \mathbb{U}_m \wr \mathfrak{S}_n^{\sim_H}\ |\ (\xi_1 \check{u}_{\sigma(1)}, \dots, \xi_j \check{u}_{\sigma(j)}) \in  \mathrm{Sol}(\check{E}) \} \big| \\
& = \sum_{\check{z} \in \mathrm{Sol}(\check{E})} \big| \{ \dot{\pi} \in \mathbb{U}_m \wr \mathfrak{S}_n^{\sim_H}\ |\ (\xi_1 \check{u}_{\sigma(1)}, \dots, \xi_j \check{u}_{\sigma(j)}) = \check{z} \} \big|.
\end{align*}
One can find a colored permutation class $\dot{\pi}$ of $\mathbb{U}_m \wr \mathfrak{S}_n^{\sim_H}$ such that $(\xi_1 \check{u}_{\sigma(1)}, \dots, \xi_j \check{u}_{\sigma(j)}) = \check{z}$ if and only if, for all $\check{k}$ in $\mathtt{S}(\check{z})$, $\mathtt{o}_{\ddot{k}}(\ddot{u}) \geq \mathtt{o}_{\ddot{k}}(\ddot{z})$. In this case, there are $\displaystyle \binom{\mathtt{o}_{\ddot{k}}(\ddot{u})}{\mathtt{o}_{\ddot{k}}(\ddot{z})}$ possibilities to choose $\mathtt{o}_{\ddot{k}}(\ddot{z})$ coordinates of $\ddot{z}$ having the value $\ddot{k}$.
\begin{itemize}
\item[$\bullet$] If $\mathtt{S}(\check{z}) = \{\check{0}\}$, then
$$\big| \{ \dot{\pi} \in \mathbb{U}_m \wr \mathfrak{S}_n^{\sim_H}\ |\ (\xi_1 \check{u}_{\sigma(1)}, \dots, \xi_j \check{u}_{\sigma(j)}) = \check{z} \} \big| = m^{j-1} \binom{\mathtt{o}_{\ddot{0}}(\ddot{u})}{\mathtt{o}_{\ddot{0}}(\ddot{z})} = \mathtt{f}_{\ddot{0}}^{(m)}\big(\mathtt{c}(\ddot{u}), \ddot{z}\big).$$
\item[$\bullet$] If $\mathtt{S}(\check{z}) \neq \{\check{0}\}$, then
$$\big| \{ \dot{\pi} \in \mathbb{U}_m \wr \mathfrak{S}_n^{\sim_H}\ |\ (\xi_1 \check{u}_{\sigma(1)}, \dots, \xi_j \check{u}_{\sigma(j)}) = \check{z} \} \big| = m^{\mathtt{o}_{\ddot{0}}(\ddot{z})} \prod_{\ddot{k} \in \mathtt{S}(\ddot{z})} \binom{\mathtt{o}_{\ddot{k}}(\ddot{u})}{\mathtt{o}_{\ddot{k}}(\ddot{z})} = \mathtt{f}_{\ddot{0}}^{(m)}\big(\mathtt{c}(\ddot{u}), \ddot{z}\big).$$
\end{itemize}
Thus, we have $$h(\check{u}) = \sum_{\check{z} \in \mathrm{Sol}(\check{E})} \mathtt{f}_{\ddot{0}}^{(m)}\big(\mathtt{c}(\ddot{u}), \ddot{z}\big) = \mathtt{f}_{\ddot{0}}^{(m)}\big(\mathtt{c}(\ddot{u}), \mathrm{Sol}(\check{E})_{\ast}\big).$$ 
Now, let $\displaystyle \mathcal{A} = \bigsqcup_{i=1}^r \big\{\pi \cdot H_i\big\}_{\dot{\pi} \in \mathbb{U}_m \wr \mathfrak{S}_n^{\sim_{H_i}}}$, and $h_i(\check{u})$ be the number of hyperplanes in the arrangement $\big\{\pi \cdot H_i\big\}_{\dot{\pi} \in \mathbb{U}_m \wr \mathfrak{S}_n^{\sim_{H_i}}}$ the element $\check{u}$ lies on. Then, $$h(\check{u}) = \sum_{i=1}^r h_i(\check{u}) = \sum_{i=1}^r \mathtt{f}_{\ddot{0}}^{(m)}\big(\mathtt{c}(\ddot{u}), \mathrm{Sol}(\check{E}_i)_{\ast}\big).$$
\end{proof}

\noindent In analogy with $\mathrm{M}_1, \dots, \mathrm{M}_s$ and $\displaystyle \bigcup_{i=1}^r \mathrm{Sol}(\check{E}_i)$, define the indice partition $\mathrm{M}_{1 \ast}, \dots, \mathrm{M}_{s \ast}$ of the set $\displaystyle \bigcup_{i=1}^r \mathrm{Sol}(\check{E}_i)_{\ast}$ as follows:
\begin{itemize}
\item[$\bullet$] For every element $\ddot{z}$ in $\mathrm{M}_{i \ast}$, there is another element $\ddot{u}$ in $\mathrm{M}_{i \ast}$ such that $\mathtt{S}(\ddot{z}) \cap \mathtt{S}(\ddot{u}) \neq \emptyset$.
\item[$\bullet$] If $i \neq j$, then, for every $\ddot{z}$ in $\mathrm{M}_{i \ast}$, and $\ddot{u}$ in $\mathrm{M}_{j \ast}$, we have $\mathtt{S}(\ddot{z}) \cap \mathtt{S}(\ddot{u}) = \emptyset$.
\end{itemize}

\noindent For $\mathcal{G}_{m,p,n}$, the indice partition of $\mathrm{Sol}(\check{E}_1)_{\ast} \cup \mathrm{Sol}(\check{E}_2)_{\ast}$ is $$\mathrm{M}_{\check{0} \ast} = \big\{\ddot{0},\, (\ddot{0}, \ddot{0})\big\},\ \text{and}\ \mathrm{M}_{\check{k} \ast} = \big\{(\ddot{k}, \ddot{k})\big\}\ \text{for}\ \ddot{k} \in \mathbb{F}_q[\zeta_m]_{\ast} \setminus \{\ddot{0}\}.$$

\noindent Now, we come to the main result of this section.

\begin{theorem} \label{ThCSH}
Let $\mathcal{A}$ be a \textsc{CSH}--arrangement having representative equations whose indice partition is $\mathrm{M}_{1 \ast}, \dots, \mathrm{M}_{s \ast}$. Then its $\zeta_m$--coboundary polynomial is
$$\bar{\chi}_{\mathcal{A}}^{(\zeta_m)}(q,t) = q^{l_m(r(\mathcal{A})-n)} \sum_{\substack{\mathsf{a} \in \mathbb{N}^{\mathbb{F}_q[\zeta_m]_{\ast}} \\ \|\mathsf{a}\| = n}} \binom{n}{\mathsf{a}} m^{n- \mathsf{a}_{\ddot{0}}} t^{\sum_{j=1}^s \mathtt{f}_{\ddot{0}}^{(m)}(\mathsf{a}, \mathrm{M}_{j \ast})}.$$
\end{theorem}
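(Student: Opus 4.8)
The plan is to combine Theorem~\ref{ThCo} with Proposition~\ref{PrCSH}, exactly mirroring the way Proposition~\ref{PrCob} was derived from Theorem~\ref{ThCo} and Lemma~\ref{LeSH}, but now keeping track of the extra combinatorial weight coming from the $\mathbb{U}_m$--action. First I would start from Theorem~\ref{ThCo}, which gives
$$q^{l_m(n-r(\mathcal{A}))} \bar{\chi}_{\mathcal{A}}^{(\zeta_m)}(q,t) = \sum_{\check{u} \in \mathbb{F}_q[\zeta_m]^n} t^{h(\check{u})},$$
and then substitute the value of $h(\check{u})$ supplied by Proposition~\ref{PrCSH}, namely $h(\check{u}) = \sum_{j=1}^s \mathtt{f}_{\ddot{0}}^{(m)}\big(\mathtt{c}(\ddot{u}), \mathrm{M}_{j \ast}\big)$, after passing from the solution sets to their common indice partition $\mathrm{M}_{1\ast}, \dots, \mathrm{M}_{s\ast}$. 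The key point is that $h(\check{u})$ depends on $\check{u}$ only through $\mathtt{c}(\ddot{u})$, i.e.\ only through the equivalence class $\ddot{u}$ of $\check{u}$ under $\sim_\ast$ (this is essentially the content of Lemma~\ref{LeEq}), so the sum over $\mathbb{F}_q[\zeta_m]^n$ can be reorganized as a sum over $\sim_\ast$--classes weighted by their cardinalities.

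The main step is therefore the counting: for a fixed class profile $\mathsf{a} = (\mathsf{a}_{\ddot{k}})_{\ddot{k} \in \mathbb{F}_q[\zeta_m]_\ast}$ with $\|\mathsf{a}\| = n$, how many vectors $\check{u} \in \mathbb{F}_q[\zeta_m]^n$ have $\mathtt{c}(\ddot{u}) = \mathsf{a}$? I would argue as follows. Choosing which of the $n$ coordinates fall into which $\sim_\ast$--class $\ddot{k}$ contributes the multinomial $\binom{n}{\mathsf{a}}$. Then, within each coordinate, one must pick an actual element of $\mathbb{F}_q[\zeta_m]$ lying in that class: a nonzero class $\ddot{k}$ has exactly $|\mathbb{U}_m| = m$ representatives (the orbit $\mathbb{U}_m \ast \check{k}$ is free since $\check{k}\neq\check 0$), while the zero class $\ddot{0}$ has a single representative $\check 0$. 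Hence the number of vectors with the given profile is $\binom{n}{\mathsf{a}} \cdot m^{n - \mathsf{a}_{\ddot{0}}}$, since $n - \mathsf{a}_{\ddot{0}}$ is the number of nonzero coordinates. Combining, and using that all such $\check{u}$ share the same exponent $h(\check{u}) = \sum_{j=1}^s \mathtt{f}_{\ddot{0}}^{(m)}(\mathsf{a}, \mathrm{M}_{j\ast})$, gives
$$q^{l_m(n-r(\mathcal{A}))} \bar{\chi}_{\mathcal{A}}^{(\zeta_m)}(q,t) = \sum_{\substack{\mathsf{a} \in \mathbb{N}^{\mathbb{F}_q[\zeta_m]_\ast} \\ \|\mathsf{a}\| = n}} \binom{n}{\mathsf{a}}\, m^{n-\mathsf{a}_{\ddot{0}}}\, t^{\sum_{j=1}^s \mathtt{f}_{\ddot{0}}^{(m)}(\mathsf{a}, \mathrm{M}_{j\ast})},$$
and multiplying both sides by $q^{l_m(r(\mathcal{A})-n)}$ yields the claimed formula.

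I expect the only genuine obstacle to be bookkeeping rather than conceptual: one has to check carefully that $\mathtt{f}_{\ddot{0}}^{(m)}(\mathsf{a}, \mathrm{M}_{j\ast})$ is indeed a well-defined function of the profile $\mathsf{a}$ (not of a particular representative vector), that replacing $\bigcup_i \mathrm{Sol}(\check{E}_i)_\ast$ by its indice partition does not change the total exponent (this uses the disjointness of supports across different $\mathrm{M}_{j\ast}$, just as in Proposition~\ref{PrCob}), and that the multiplicity $m^{n-\mathsf{a}_{\ddot 0}}$ is not already absorbed inside $\mathtt{f}_{\ddot{0}}^{(m)}$ — it is not, because $\mathtt{f}_{\ddot{0}}^{(m)}$ records the colored-permutation count landing on a solution tuple, whereas $m^{n-\mathsf{a}_{\ddot 0}}$ records the number of underlying vectors in a given $\sim_\ast$--class. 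Keeping these two $m$-powers conceptually separate is the subtle part; once that is clear, the proof is a direct transcription of the argument for Proposition~\ref{PrCob} with Lemma~\ref{LeSH} replaced by Proposition~\ref{PrCSH}.
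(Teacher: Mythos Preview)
Your proposal is correct and follows essentially the same route as the paper: start from Theorem~\ref{ThCo}, insert the expression for $h(\check{u})$ from Proposition~\ref{PrCSH} (rewritten via the indice partition), and then collapse the sum over $\check{u}\in\mathbb{F}_q[\zeta_m]^n$ to a sum over profiles $\mathsf{a}$ using Lemma~\ref{LeEq} and the count $|\{\check u:\mathtt{c}(\ddot u)=\mathsf a\}|=\binom{n}{\mathsf a}m^{n-\mathsf a_{\ddot 0}}$. The only cosmetic difference is that the paper inserts an explicit intermediate sum over $\ddot{u}\in\mathbb{F}_q[\zeta_m]_\ast^n$ with weight $m^{n-\mathtt{o}_{\check 0}(\check u)}$ before grouping by profile, whereas you do the fiber count in one step.
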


\begin{proof}
Using the indice partition of $\displaystyle \bigcup_{i=1}^r \mathrm{Sol}(\check{E}_i)$, the number of hyperplanes the element $\check{u}$ of lies on is $$h(\check{u}) = \sum_{j=1}^s \mathtt{f}_{\ddot{0}}^{(m)}\big(\mathtt{c}(\ddot{u}), \mathrm{M}_{j \ast}\big) \quad \text{(Proposition \ref{PrCSH})}.$$
From Theorem \ref{ThCo}, we have
\begin{align*}
q^{l_m(n-r(\mathcal{A}))} \bar{\chi}_{\mathcal{A}}^{(\zeta_m)}(q,t) & = \sum_{\check{u} \in \mathbb{F}_q[\zeta_m]^n} t^{\sum_{j=1}^s \mathtt{f}_{\ddot{0}}^{(m)}\big(\mathtt{c}(\ddot{u}), \mathrm{M}_{j \ast}\big)} \\
& = \sum_{\ddot{u} \in \mathbb{F}_q[\zeta_m]_{\ast}^n} m^{n- \mathtt{o}_{\check{0}}(\check{u})} t^{\sum_{j=1}^s \mathtt{f}_{\ddot{0}}^{(m)}\big(\mathtt{c}(\ddot{u}), \mathrm{M}_{j \ast}\big)} \\
& \qquad \text{(Lemma \ref{LeEq} with $|\mathbb{U}_m \ast \check{u}| = m^{n- \mathtt{o}_{\check{0}}(\check{u})}$)} \\
& = \sum_{\substack{\mathsf{a} \in \mathbb{N}^{\mathbb{F}_q[\zeta_m]_{\ast}} \\ \|\mathsf{a}\| = n}} \binom{n}{\mathsf{a}} m^{n- \mathsf{a}_{\ddot{0}}} t^{\sum_{j=1}^s \mathtt{f}_{\ddot{0}}^{(m)}(\mathsf{a}, \mathrm{M}_{j \ast})}.
\end{align*}
\end{proof}

\noindent With $r(\mathcal{G}_{m,p,n}) = n$, the $\zeta_m$--coboundary polynomial of $\mathcal{G}_{m,p,n}$ is $$\bar{\chi}_{\mathcal{G}_{m,p,n}}^{(\zeta_m)}(q,t) = \sum_{\substack{\mathsf{a} \in \mathbb{N}^{\mathbb{F}_q[\zeta_m]_{\ast}} \\ \|\mathsf{a}\| = n}} \binom{n}{\mathsf{a}} m^{n- \mathsf{a}_{\ddot{0}}} t^{\mathsf{a}_{\ddot{0}} + m \binom{\mathsf{a}_{\ddot{0}}}{2}} t^{\sum_{\ddot{k} \in \mathbb{F}_q[\zeta_m]_{\ast} \setminus \{\ddot{0}\}} \binom{\mathsf{a}_{\ddot{k}}}{2}}.$$

\noindent Take $r$ hyperplanes $H_i = \{c_1^{(i)} z_1 + \dots + c_{i_j}^{(i)} z_{i_j} = d^{(i)}\}$ with $c_{i_j}^{(i)} \neq 0$. Setting $h = \max(i_1, \dots, i_r)$, define a sequence of \textsc{CSH}--arrangements with representative hyperplanes $H_1, \dots, H_r$ by a sequence of hyperplane arrangements $(\mathcal{A}_n)_{n \geq h}$ in $(\mathbb{C}^n)_{n \geq h}$ such that $$\mathcal{A}_n = \bigsqcup_{i=1}^r  \big\{\pi \cdot H_i\big\}_{\dot{\pi} \in \mathbb{U}_m \wr \mathfrak{S}_n^{\sim_{H_i}}}.$$

\begin{theorem} \label{ThCoEGF}
Let $(\mathcal{A}_n)_{n \geq h}$ be a sequence of \textsc{CSH}--arrangements having the same representative equations, and $\mathrm{M}_{1 \ast}, \dots, \mathrm{M}_{s \ast}$ their common indice partition. Consider the EGF 
$$g(x) = \sum_{n \in \mathbb{N}} g_n \frac{x^n}{n!} \quad \text{with} \quad g(x) = \prod_{j=1}^s \ \sum_{n \in \mathbb{N}} \Big( \sum_{\substack{\mathsf{a} \in \mathbb{N}^{\mathtt{S}(\mathrm{M}_{j \ast})} \\ \|\mathsf{a}\| = n}} \binom{n}{\mathsf{a}} m^{n- \mathsf{a}_{\ddot{0}}} t^{\mathtt{f}_{\ddot{0}}^{(m)}(\mathsf{a}, \mathrm{M}_{j \ast})} \Big) \frac{x^n}{n!}.$$
such that $\mathsf{a}_{\ddot{0}} = 0$ if $\ddot{0} \notin \mathtt{S}(\mathrm{M}_{j \ast})$. Then,
$$\sum_{n \geq h} q^{l_m(n-r(\mathcal{A}_n))} \bar{\chi}_{\mathcal{A}_n}^{(\zeta_m)}(q,t) \frac{x^n}{n!} = g(x) - \sum_{n=0}^{h-1} g_n \frac{x^n}{n!}.$$
\end{theorem}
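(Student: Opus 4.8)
The plan is to mirror, in the colored setting, the proof of Proposition \ref{PrSy}: I would feed the closed form supplied by Theorem \ref{ThCSH} into the general convolution formula of Proposition \ref{PrPrin}. Since the sequence $(\mathcal{A}_n)_{n\geq h}$ has a fixed family of representative equations, hence a fixed indice partition $\mathrm{M}_{1\ast},\dots,\mathrm{M}_{s\ast}$, Theorem \ref{ThCSH} gives, for every $n\geq h$,
$$q^{l_m(n-r(\mathcal{A}_n))}\,\bar{\chi}_{\mathcal{A}_n}^{(\zeta_m)}(q,t)=\sum_{\substack{\mathsf{a}\in\mathbb{N}^{\mathbb{F}_q[\zeta_m]_{\ast}}\\ \|\mathsf{a}\|=n}}\binom{n}{\mathsf{a}}\,m^{\,n-\mathsf{a}_{\ddot{0}}}\,t^{\sum_{j=1}^{s}\mathtt{f}_{\ddot{0}}^{(m)}(\mathsf{a},\mathrm{M}_{j\ast})}.$$
I would define $g_n$ to be this right-hand side for every $n\in\mathbb{N}$, so that $g_n=q^{l_m(n-r(\mathcal{A}_n))}\bar{\chi}_{\mathcal{A}_n}^{(\zeta_m)}(q,t)$ precisely for $n\geq h$; the whole task then reduces to identifying $g(x)=\sum_{n}g_nx^n/n!$ with the asserted product of $s$ EGF's, after which subtracting the partial sum $\sum_{n=0}^{h-1}g_nx^n/n!$ produces the claimed identity exactly as at the end of the proof of Proposition \ref{PrSy}.

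To identify $g(x)$ I would split the summation index along the blocks. The sets $\mathtt{S}(\mathrm{M}_{1\ast}),\dots,\mathtt{S}(\mathrm{M}_{s\ast})$ are pairwise disjoint by the second defining property of the indice partition, and (every residue class of $\mathbb{F}_q[\zeta_m]_{\ast}$ occurring as a coordinate of a solution, as it does for $\mathcal{G}_{m,p,n}$) they partition $\mathbb{F}_q[\zeta_m]_{\ast}$, so giving $\mathsf{a}\in\mathbb{N}^{\mathbb{F}_q[\zeta_m]_{\ast}}$ is the same as giving a tuple $(\mathsf{a}^{(1)},\dots,\mathsf{a}^{(s)})$ with $\mathsf{a}^{(j)}\in\mathbb{N}^{\mathtt{S}(\mathrm{M}_{j\ast})}$, and then $\|\mathsf{a}\|=\sum_j\|\mathsf{a}^{(j)}\|$ and $\binom{n}{\mathsf{a}}=\binom{n}{\mathsf{a}^{(1)}\cdots\mathsf{a}^{(s)}}$. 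Two observations make the summand factor through this decomposition. First, for each $j$ the integer $\mathtt{f}_{\ddot{0}}^{(m)}(\mathsf{a},\mathrm{M}_{j\ast})$ involves only the coordinates $\mathsf{a}_t$ with $t\in\mathtt{S}(\mathrm{M}_{j\ast})$: every $v\in\mathrm{M}_{j\ast}$ has $\mathtt{S}(v)\subseteq\mathtt{S}(\mathrm{M}_{j\ast})$, and in each branch of the definition of $\mathtt{f}_{\ddot{0}}^{(m)}$ the value depends only on the $\mathsf{a}_t$ with $t\in\mathtt{S}(v)$ — in the branch where $\mathtt{o}_{\ddot{0}}(v)$ equals the length of $v$ only $\mathsf{a}_{\ddot{0}}$ occurs, and there $\ddot{0}\in\mathtt{S}(v)$. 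In particular the distinguished class $\ddot{0}$ lies in exactly one block, and $\sum_{j}\mathtt{f}_{\ddot{0}}^{(m)}(\mathsf{a},\mathrm{M}_{j\ast})=\sum_{j}\mathtt{f}_{\ddot{0}}^{(m)}(\mathsf{a}^{(j)},\mathrm{M}_{j\ast})$. Second, writing $n=\sum_j\|\mathsf{a}^{(j)}\|$ and $\mathsf{a}_{\ddot{0}}=\sum_j\mathsf{a}^{(j)}_{\ddot{0}}$ with the convention that $\mathsf{a}^{(j)}_{\ddot{0}}=0$ whenever $\ddot{0}\notin\mathtt{S}(\mathrm{M}_{j\ast})$, the weight distributes as $m^{\,n-\mathsf{a}_{\ddot{0}}}=\prod_j m^{\,\|\mathsf{a}^{(j)}\|-\mathsf{a}^{(j)}_{\ddot{0}}}$ — this is precisely the proviso made in the statement.

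With these two factorizations in hand, the expression for $g_n$ is exactly of the shape treated by Proposition \ref{PrPrin}, applied with $s$ factors, with the $j$-th vector variable $\mathsf{a}^{(j)}$ ranging over $\mathbb{N}^{\mathtt{S}(\mathrm{M}_{j\ast})}$, and with
$$p^{(j)}_{\mathsf{a}^{(j)}}:=m^{\,\|\mathsf{a}^{(j)}\|-\mathsf{a}^{(j)}_{\ddot{0}}}\,t^{\,\mathtt{f}_{\ddot{0}}^{(m)}(\mathsf{a}^{(j)},\mathrm{M}_{j\ast})}\in\mathbb{R}[t].$$
Proposition \ref{PrPrin} then gives $g(x)=v^{(1)}(x)\cdots v^{(s)}(x)$ with
$$v^{(j)}_{n}=\sum_{\|\mathsf{a}^{(j)}\|=n}\binom{n}{\mathsf{a}^{(j)}}\,m^{\,n-\mathsf{a}^{(j)}_{\ddot{0}}}\,t^{\,\mathtt{f}_{\ddot{0}}^{(m)}(\mathsf{a}^{(j)},\mathrm{M}_{j\ast})},$$
which is precisely the $j$-th factor of the product in the statement. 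Since $\bar{\chi}_{\mathcal{A}_n}^{(\zeta_m)}(q,t)$ is not defined for $n<h$ while $q^{l_m(n-r(\mathcal{A}_n))}\bar{\chi}_{\mathcal{A}_n}^{(\zeta_m)}(q,t)=g_n$ for all $n\geq h$, removing the first $h$ terms of $g(x)$ yields the asserted formula.

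There is no deep obstacle here, the argument being the colored counterpart of Proposition \ref{PrSy}; the point requiring genuine care is the compatibility of the two features specific to the colored case with the block decomposition — namely that the global weight $m^{\,n-\mathsf{a}_{\ddot{0}}}$ splits multiplicatively over the blocks and that the case distinction in the definition of $\mathtt{f}_{\ddot{0}}^{(m)}$, together with the fact that the single distinguished class $\ddot{0}$ sits inside one block, preserves the "depends only on its own block" property without which Proposition \ref{PrPrin} could not be invoked. Ensuring that every residue class is accounted for by some $\mathtt{S}(\mathrm{M}_{j\ast})$ (so that the product has exactly $s$ factors) and the bookkeeping of the first $h$ coefficients are then routine.
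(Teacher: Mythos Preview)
Your proposal is correct and follows essentially the same approach as the paper's own proof: invoke Theorem~\ref{ThCSH} to obtain the closed form for $g_n$, then apply Proposition~\ref{PrPrin} to factor $g(x)$ into the product over the blocks of the indice partition, and finally subtract the initial terms for $n<h$. Your write-up is in fact more careful than the paper's, since you explicitly verify the two block-compatibility conditions (that $m^{\,n-\mathsf{a}_{\ddot{0}}}$ splits multiplicatively and that $\mathtt{f}_{\ddot{0}}^{(m)}(\mathsf{a},\mathrm{M}_{j\ast})$ depends only on the coordinates in $\mathtt{S}(\mathrm{M}_{j\ast})$) which the paper tacitly assumes when invoking Proposition~\ref{PrPrin}.
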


\begin{proof}
From Theorem \ref{ThCSH}, we know that $$q^{l_m(n- r(\mathcal{A}_n))} \bar{\chi}_{\mathcal{A}_n}^{(\zeta_m)}(q,t) = \sum_{\substack{\mathsf{a} \in \mathbb{N}^{\mathbb{F}_q[\zeta_m]_{\ast}} \\ \|\mathsf{a}\| = n}} \binom{n}{\mathsf{a}} m^{n- \mathsf{a}_{\ddot{0}}} t^{\sum_{j=1}^s \mathtt{f}_{\ddot{0}}^{(m)}(\mathsf{a}, \mathrm{M}_{j \ast})}.$$
Set the EGF $\displaystyle g(x) = \sum_{n \in \mathbb{N}} g_n \frac{x^n}{n!}$ with
$\displaystyle g_n = \sum_{\substack{\mathsf{a} \in \mathbb{N}^{\mathbb{F}_q[\zeta_m]_{\ast}} \\ \|\mathsf{a}\| = n}} \binom{n}{\mathsf{a}} m^{n- \mathsf{a}_{\ddot{0}}} t^{\sum_{j=1}^s \mathtt{f}_{\ddot{0}}^{(m)}(\mathsf{a}, \mathrm{M}_{j \ast})}$. We deduce from Proposition \ref{PrPrin} that $$g(x) = \prod_{j=1}^s \ \sum_{n \in \mathbb{N}} \Big( \sum_{\substack{\mathsf{a} \in \mathbb{N}^{\mathtt{S}(\mathrm{M}_{j \ast})} \\ \|\mathsf{a}\| = n}} \binom{n}{\mathsf{a}} m^{n- \mathsf{a}_{\ddot{0}}} t^{\mathtt{f}_{\ddot{0}}^{(m)}(\mathsf{a}, \mathrm{M}_{j \ast})} \Big) \frac{x^n}{n!}.$$
Since $\bar{\chi}_{\mathcal{A}_n}^{(\zeta_m)}(q,t)$ is not defined for $n < h$, and $q^{l_m(n- r(\mathcal{A}_n))}\bar{\chi}_{\mathcal{A}_n}^{(\zeta_m)}(q,t) = g_n$ for $n \geq h$, we get the result.
\end{proof}

\noindent Since $|\mathbb{F}_q[\zeta_m]_{\ast} \setminus \{\ddot{0}\}| = \frac{q^{l_m}-1}{m}$, the EGF of the $\zeta_m$--coboundary polynomials of $(\mathcal{G}_{m,p,n})_{n \in \mathbb{N}}$ is
$$\sum_{n \in \mathbb{N}} \bar{\chi}_{\mathcal{G}_{m,p,n}}^{(\zeta_m)}(q,t) \frac{x^n}{n!} = \Big(\sum_{n \in \mathbb{N}} t^{n+m\binom{n}{2}} \frac{x^n}{n!}\Big) \Big(\sum_{n \in \mathbb{N}} m^n t^{\binom{n}{2}} \frac{x^n}{n!}\Big)^{\frac{q^{l_m}-1}{m}}.$$

\noindent If $p = m$, the hyperplane arrangement associated to $G(m,m,n)$ is $$\mathcal{G}_{m,m,n} = \big\{\pi \cdot \{z_i - z_j = 0\}\big\}_{\dot{\pi} \in \mathbb{U}_m \wr \mathfrak{S}_n^{\sim_{\{z_i - z_j = 0\}}}}.$$ And, for $m \geq 2$, the EGF of the $\zeta_m$--coboundary polynomials associated to $(\mathcal{G}_{m,m,n})_{n \in \mathbb{N}}$ is
$$\sum_{n \in \mathbb{N}} \bar{\chi}_{\mathcal{G}_{m,m,n}}^{(\zeta_m)}(q,t) \frac{x^n}{n!} = \Big(\sum_{n \in \mathbb{N}} t^{m\binom{n}{2}} \frac{x^n}{n!}\Big) \Big(\sum_{n \in \mathbb{N}} m^n t^{\binom{n}{2}} \frac{x^n}{n!}\Big)^{\frac{q^{l_m}-1}{m}}.$$

\bibliographystyle{abbrvnat}

\end{document}